\documentclass[a4paper]{cas-sc}
\usepackage[numbers,sort&compress]{natbib}
\usepackage[pagewise]{lineno}
\usepackage{mathrsfs}
\usepackage{amssymb,amsmath,euscript, amsthm, amsfonts, amssymb, enumerate}
\usepackage{graphicx}
\usepackage{caption}
\usepackage{subcaption}
\usepackage{siunitx}
\usepackage{epstopdf}
\usepackage{CJK,CJKnumb}
\usepackage{float}
\usepackage{color}
\usepackage[section]{placeins}
\usepackage{url}
\usepackage{tabularx}

\def\red {\color{red}}

\newtheorem{theorem}{Theorem} 
\newtheorem{lemma}{Lemma} 
\newtheorem{remark}{Remark} 
\newtheorem{example}{Example}

\def\tsc#1{\csdef{#1}{\textsc{\lowercase{#1}}\xspace}}
\tsc{WGM}
\tsc{QE}
\tsc{EP}
\tsc{PMS}
\tsc{BEC}
\tsc{DE}

\begin{document}

	\let\WriteBookmarks\relax
	\def\floatpagepagefraction{1}
	\def\textpagefraction{.001}
	\shorttitle{A Fully Implicit Stochastic SIS Epidemic Model}
	
	\shortauthors{Zeyao Jin et al.}
	
	\title [mode = title]{Dynamic Properties of Numerical Solution for Stochastic Epidemic Models with a Fully Implicit Form}
	
	\tnotemark[1]
	
	\tnotetext[1]{ }
	\author{Zeyao Jin}[style=chinese]
	\address{College of Mathematics and System Science, Xinjiang University,  Urumqi 830017, China}
	
	\author{Zhiming Li}[style=chinese]\cormark[1]
	\ead{Corresponding author at: College of Mathematics and System Science, Xinjiang University,  Urumqi 830017, China. Email address: zmli@xju.edu.cn.}
	\author{Shenglong Chen}[style=chinese]
	
\begin{abstract}
This paper establishes a fully implicit model to investigate the spread of epidemics and studies its dynamic properties, extending the semi-implicit model in Yang et al. \cite{1} (Commun. Nonlinear Sci. Numer. Simul., 152 (2026) 109140 ).   We first employ the split-step backward Euler method to address the non-adaptability arising from backward Euler discretization and ensure the global convergence of the numerical solution.  By linearizing the model, the computational cost is reduced at the expense of a slight increase in error, effectively avoiding complex iterative solution processes. Then, we derive the existence, uniqueness, positivity, and boundedness of the numerical solution, and obtain its global convergence order. On this basis, the extinction and persistence of the disease are investigated under three different scenarios. The results indicate that environmental noise is a key factor leading to disease extinction. Finally, numerical simulations visually verify the obtained theoretical conclusions.
\end{abstract}
	
\begin{keywords}
Stochastic SIS model\sep Linearly implicit $\theta$-methods\sep Slip-step backward Euler method\sep Wiener truncation
	\end{keywords}
	
	\maketitle
	
\section{Introduction}
Epidemic models are important theoretical tools for understanding and controlling disease transmission. Deterministic models, such as the classical SIR \cite{2} and SIS \cite{3} models, describe the average evolutionary trend in the number of infected individuals through ordinary differential equations, providing an important theoretical basis for public health decision-making \cite{4}. However, actual transmission processes are often influenced by stochastic factors such as individual differences, contact randomness, and environmental disturbances \cite{5,6}. When the population size is small, or the outbreak is in its initial stage, random fluctuations may cause the actual transmission trajectory to deviate significantly from the predictions of deterministic models and may even affect whether the disease goes extinct. In recent years, stochastic epidemic models have received increasing attention, including stochastic SIR models \cite{7,8,9}, stochastic SIS models \cite{10,11,12}. Based on these stochastic models, scale-free networks \cite{13}, reaction-diffusion \cite{14}, and Markov switching \cite{15} have been further applied to characterize the complex features of transmission processes.

Although stochastic models have theoretical advantages, in most cases the corresponding stochastic differential equations are difficult to solve analytically \cite{16}, making numerical methods the primary means of analysis. For problems with strong stiffness, traditional numerical methods often require extremely small step sizes due to stability constraints, resulting in low computational efficiency. To address this issue, researchers have developed a series of numerical schemes with strong stability. Among them, linear implicit methods have been widely used in the study of ordinary differential equations \cite{17,18}, partial differential equations \cite{19,20}, stochastic differential equations \cite{16,21}, and the Allen-Cahn equation \cite{22}, demonstrating good convergence and stability. Some scholars have further applied them to the study of epidemic models. For instance, Xu et al. \cite{23} established an SIR model with a nonlinear incidence rate and impulsive vaccination, and analyzed the asymptotic behavior of the impulsive linear implicit Euler method.  Most of the aforementioned numerical methods are built upon the classical It\^{o} integral framework, which requires the integrand to be adapted, meaning that the evolution of the system depends only on historical information. 

However, in some complex biological systems, when environmental feedback, time delays, or the randomization of specific parameters are considered, non-adapted stochastic differential equations may arise \cite{24}. The future state of such systems may depend on future information, violating the fundamental causality assumption of the It\^{o} integral and posing essential difficulties for numerical solutions. For example, when discretizing stochastic differential equations using the backward Euler method, both the drift and diffusion terms contain future information, making it difficult to obtain numerical solutions. To address this problem, researchers have proposed three strategies: (a) transform the non-adapted problem into an adapted form through mathematical transformations \cite{25,26}; (b) introduce more general integral concepts, such as the Skorohod integral \cite{27,28,29,30}; and (c) develop specialized numerical schemes, such as the forward integration method \cite{28,31}.  Among these three strategies, numerical methods attract the attention of many scholars. Recently, Yang et al. \cite{1} conducted numerical analysis of stochastic SIS models proposed in \cite{32} via the linear implicit $\theta$-method and established their dynamic behaviors. Next, we briefly review the modeling process of their work. 
Suppose that the population is divided into two compartments: susceptible and infectious individuals. Let $S(t)$ and $I(t)$ be the numbers of the susceptible and infectious individuals at time $t$, respectively. Gray et al. \cite{32} proposed a stochastic SIS epidemic model below:
	\begin{equation}\label{sis}
		\begin{cases}
			dS(t) = \big[ \mu N - \beta S(t) I(t) + \gamma I(t) - \mu S(t) \big] dt - \sigma S(t) I(t) dW(t), \\
			dI(t) = \big[ \beta S(t) I(t) - (\mu + \gamma) I(t) \big] dt + \sigma S(t) I(t) dW(t),
		\end{cases}
	\end{equation}
with initial values \(S(0)+I(0)=N\), where \(N\) is the total population size in which the disease is spreading. Here, \( \gamma \) is the rate at which infected individuals recover, so \( 1/\gamma \) represents the average infectious period; \( \mu \) is the per capita death rate. The parameter \( \beta \) is the disease transmission coefficient, defined as \( \beta = \lambda/N \), where \( \lambda \) is the per capita disease contact rate. 	
The model  (\ref{sis}) can be reformulated in a more compact form
	\begin{equation}\label{dI}
		dI(t) = I(t) \left[ \big( \beta N - \mu - \gamma - \beta I(t) \big) dt + \sigma (N - I(t)) dW(t) \right].
	\end{equation}
Define a threshold   $R_0^S:= \beta N/(\mu+\gamma)-\sigma^2 N^2/(2(\mu+\gamma))$. \cite{32} demonstrated the boundedness and positivity of the analytical solution $I(t)^\star$ of the model. When $R_0^S < 1$, the disease goes to extinction in both small- and large-noise environments, and when $R_0^S > 1$, the disease persists. 	
  To investigate the numerical solution of the SIS model (\ref{dI}),  let $h$ be the increment of $t$, namely the step size, and $I_{n}$ be the numerical approximation of $I(t_n)$, $n=0,1,2,\cdots$, $t_n=nh$. Using the Wiener truncation $\xi_{n}^{h}$ defined by \cite{33} and the backward Euler method, the numerical solution of Eq. (\ref{dI}) can be expressed as
\begin{equation}\label{In1}
I_{n+1}=I_{n}+h\big(\beta N - \mu - \gamma - \beta I_{n+1}\big)I_{n+1}+\sqrt{h}\sigma I_{n+1}\big(N-I_{n+1}\big)\xi_{n}^{h}, 
\end{equation}
where 
	$
		\xi_n^h=\xi_nI_{\{|\xi_n|\leq A_h\}}+A_hI_{\{\xi_n>A_h\}}-A_hI_{\{ \xi_n<-A_h\}}
$
 with a strong inequality $\mathbb{E}(\xi_n^h-\xi_n)^2\leq h$ and $A_h=\sqrt{2|\ln h|}$, where $I_A$ is an indicator function of $A$. Yang et al. \cite{1} used the linear $\theta$-methods in the drift term of Eq. (\ref{In1}) and established the following model:
 \begin{equation}\label{Y}
 	I_{n+1} = I_n + \theta h(\beta N - \mu - \gamma - \beta I_n)I_{n+1} + (1 - \theta)h(\beta N - \mu - \gamma - \beta I_n)I_n + \sqrt{h}\sigma I_n(N - I_n)\xi_n^h,
 \end{equation}
which is referred to as a semi-implicit stochastic model in this paper because of the modified drift term. It is further proved that the numerical solution $I_n^\star$ given by Eq. (\ref{Y}) is bounded and positive.  When $R_0^S < 1$ and the noise is small, the disease tends to extinction. In a high-noise environment, the disease tends to extinction regardless of the value of $R_0^S$. If $R_0^S > 1$, the disease persists. 

In the model  (\ref{Y}), note that all existing results are based on the linear -$\theta$ method in the drift term of Eq. (\ref{In1}), ignoring the effect of the random perturbation part. In practice, environmental noise may be a key factor affecting the occurrence, spread, and outcome of diseases. However,  there is relatively limited research on extending the linear implicit $\theta$-method to the diffusion term and on constructing fully implicit stochastic epidemic models. To solve the problem, we will use the Split-Step Backward Euler (SSBE) method, which offers unique advantages for handling certain types of non-adaptively or stiffly driven stochastic differential equations \cite{34}. It can reduce the difficulty of solving nonlinear systems while maintaining good stability, and exhibits strong fault tolerance to non-adapted terms, thereby yielding more reliable long-term simulation results.  Based on the above analysis, this paper discretizes the stochastic SIS epidemic model using the SSBE method and further introduces the linear implicit $\theta$-method into the diffusion term, aiming to establish a class of fully implicit stochastic SIS models and to analyze their numerical properties. The main contributions and innovations of this paper are summarized as follows:

(i)  Based on the SSBE method and the linear implicit $\theta$-methods, a class of fully implicit $\theta$ SIS epidemic models is established, in which the forward Euler method and the backward Euler method become special cases of the model.

(ii) The existence, uniqueness, positivity, and boundedness of the numerical solution are investigated, and the extinction and persistence of the disease under three different types of conditions are proved.

(iii) By linearizing the model, the computational cost is reduced with a small increase in error while ensuring the global convergence of the numerical solution.
	
The remainder of this paper is organized as follows. Section \ref{s2} proposes a novel stochastic SIS model via the SSBE method, and proves the positivity and boundness of numerical solutions.   Section \ref{sec3} provides a theoretical analysis of the model's dynamical properties, and Section \ref{s4} validates the theoretical results through numerical simulations. Finally, a brief conclusion is given in Section \ref{s5}.
	
\section{A Fully Implicit Stochastic SIS model}\label{s2}

 To obtain a more flexible form of the SIS model of (\ref{Y}), it is necessary to apply the linear implicit $\theta$-method to the diffusion term,  which is the main objective of this work. From Eq. (\ref{In1}), it can be observed that after discretizing Eq. (\ref{dI}), the stochastic term contains the term $I_{n+1}$, which implies that the stochastic term involves future information, thereby causing difficulties in solving the equation. In \cite{1}, $I_{n+1}$ is replaced by $I_{n}$ to avoid this situation. Next, we adopt a more robust approach to circumvent this difficulty.

Based on \cite{34}, we introduce the SSBE method under the local Lipschitz condition. Consider a stochastic differential equation of the form
	\begin{equation}\label{eq:sde}
		\mathrm{d}I(t) = f(I(t))\,\mathrm{d}t + g(I(t))\,\mathrm{d}W(t),\quad I(0)=I_0,\quad t\in[0,T],
	\end{equation}
	where $f:\mathbb{R}^m\to\mathbb{R}^m$, $g:\mathbb{R}^m\to\mathbb{R}^{m\times d}$, and $W(t)$ is a $d$-dimensional Wiener process defined on a filtered probability space $(\Omega,\mathcal{F},\{\mathcal{F}_t\}_{t\geq0},\mathbb{P})$. 
	Assume that the solution $I(t)$ remains almost surely in a bounded domain $\mathcal{D}\subset\mathbb{R}^m$. 
For the purposes of this definition, we consider the one-dimensional case $m=1$ with $\mathcal{D}=[0,N]$, where $N>0$ is a constant. Let $h>0$ be a time step, and define $t_n=nh$ for $n=0,1,\dots,n^{max}$ with $n^{max}h=T$. The SSBE method generates approximations $I_n\approx I(t_n)$ by the following two-step procedure:
	\begin{subequations}\label{eq:ssbe}
		\begin{align}
			I_{n+1}^* &= I_{n+1} + h\, f(I_{n+1}^*), \label{eq:ssbe1}\\
			I_{n+1} &= I_{n+1}^* + g(I_{n+1}^*)\,\Delta W_n, \label{eq:ssbe2}
		\end{align}
	\end{subequations}
	where $\Delta W_n = W(t_{n+1})-W(t_n)$ are independent Gaussian increments with $\mathbb{E}[\Delta W_n]=0$ and $\mathbb{E}[|\Delta W_n|^2]=h$, and $I_0=i_0\in[0,N]$. 	The following conditions are required for the well-posedness and convergence of the SSBE method on the domain $[0,N]$:

(i) The coefficients $f$ and $g$ are continuously differentiable on $[0,N]$, i.e., $f,g\in C^1$.
		
(ii) There exists a constant $p_1>0$ such that for all $a,b\in[0,N]$,
			$\langle a-b,\; f(a)-f(b) \rangle \leq \mu\,|a-b|^2.$

(iii) There exists a constant $p_2>0$ such that for all $a,b\in[0,N]$,
			$|g(a)-g(b)|^2 \leq p_2\,|a-b|^2.$

(iv) Let $p_3 = \bigl(p_1+\tfrac12\bigr)\vee 2p_2$. The time step $h$ must satisfy
			$h <1/(2p_3).$

	Under Conditions (i)--(iv), the SSBE method is strongly convergent in the mean-square sense, i.e.,
$$
		\lim_{h\to0}\mathbb{E}\left[\sup_{0\leq t\leq T} |\overline{I}(t)-I (t)|^2\right]=0,
$$
	where $\overline{I}(t)$ is a continuous-time extension of $\{I_n\}$.

 \begin{theorem}\label{thm1}{\rm
 	For $I(t)\in[0,N]$, Eq. (\ref{dI}) can be expressed as }
 	\begin{equation}\label{ssbe}
 		\begin{cases} 
 			I_{n+1}^*=I_n+h(\beta N-\mu-\gamma-\beta I_{n+1}^*)I_{n+1}^*,\\
 			I_{n+1}=I_{n+1}^*+\sigma I_{n+1}^*(N-I_{n+1}^*)\varDelta W_n.
 		\end{cases}
 	\end{equation}
 \end{theorem}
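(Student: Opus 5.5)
The claim is essentially that the SSBE scheme (\ref{eq:ssbe}), applied to Eq. (\ref{dI}) on the domain $\mathcal{D}=[0,N]$, produces exactly the two-stage recursion (\ref{ssbe}). So the plan has three parts: identify the coefficients, check Conditions (i)--(iv) on $[0,N]$ so that the SSBE framework is legitimate, and substitute into the two-step procedure.

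\textbf{Identifying the coefficients.} Writing Eq. (\ref{dI}) in the form (\ref{eq:sde}) with $m=d=1$, I take
\[
f(x)=x(\beta N-\mu-\gamma-\beta x),\qquad g(x)=\sigma x(N-x).
\]
Both are polynomials, so Condition (i) holds trivially.

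\textbf{Conditions (ii) and (iii).} For (ii), I will compute
\[
f(a)-f(b)=(a-b)\bigl[\beta N-\mu-\gamma-\beta(a+b)\bigr].
\]
Since $a,b\ge0$ on $[0,N]$, this gives
\[
\langle a-b,f(a)-f(b)\rangle\le(\beta N-\mu-\gamma)^+\,|a-b|^2,
\]
so I can take $p_1=(\beta N-\mu-\gamma)\vee\varepsilon$ for any small $\varepsilon>0$ to keep $p_1$ positive. (The $\mu$ appearing in Condition (ii) should be read as $p_1$.) For (iii), I will use
\[
g(a)-g(b)=\sigma(a-b)\bigl(N-a-b\bigr)
\quad\text{and}\quad |N-a-b|\le N \text{ on } [0,N],
\]
which gives $p_2=\sigma^2N^2$. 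Condition (iv) then just fixes the admissible step size, $h<1/(2p_3)$.

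\textbf{Substitution and index alignment.} In (\ref{eq:ssbe1}), the displayed relation $I^*_{n+1}=I_{n+1}+hf(I^*_{n+1})$ must be read as the usual SSBE step $I^*_{n+1}=I_n+hf(I^*_{n+1})$; otherwise the scheme never advances from $I_n$. I will note this explicitly and align the indices accordingly. With that reading, substituting $f$ gives the first line of (\ref{ssbe}), and substituting $g$ into (\ref{eq:ssbe2}) gives the second line.

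The main obstacle is justifying that the implicit first stage is meaningful. It is a quadratic in $I^*_{n+1}$:
\[
h\beta\,(I^*_{n+1})^2+\bigl(1-h(\beta N-\mu-\gamma)\bigr)I^*_{n+1}-I_n=0 .
\]
For $h$ small, so that $1-h(\beta N-\mu-\gamma)>0$, and for $I_n\ge0$, the constant term $-I_n\le0$ forces exactly one nonnegative root. I will select that root as $I^*_{n+1}$. Its boundedness by $N$ and the requirement that the second stage stay in $[0,N]$ are properties of the numerical solution that the paper treats later with the truncated increment, so here I will only record that the representation (\ref{ssbe}) is well defined.
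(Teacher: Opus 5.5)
Your proposal is correct and follows essentially the same route as the paper: identify $f(x)=x(\beta N-\mu-\gamma-\beta x)$ and $g(x)=\sigma x(N-x)$, verify Conditions (i)--(iv) on $[0,N]$, and substitute into the SSBE step, while your index correction $I^*_{n+1}=I_n+hf(I^*_{n+1})$ and your unique-nonnegative-root argument for the quadratic first stage supply details the paper leaves implicit. Your constants are the sound ones: the paper's $p_1=2\beta$ comes from miscomputing $f(a)-f(b)$ as $2\beta(b-a)$ and fails whenever $\beta N-\mu-\gamma>2\beta$ (e.g.\ $N=100$, $\beta=0.6$, $\mu+\gamma=45$, as in the persistence example), whereas your factorizations $f(a)-f(b)=(a-b)[\beta N-\mu-\gamma-\beta(a+b)]$ and $g(a)-g(b)=\sigma(a-b)(N-a-b)$ give the valid $p_1=(\beta N-\mu-\gamma)\vee\varepsilon$ and the sharp $p_2=\sigma^2N^2$, of which the paper's $8N^2+3\sigma^2N^2$ is only a loose overestimate.
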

 \begin{proof}
 	From Eq.  (\ref{dI}), for  $0\leq t\leq T$ we have
 	$$ 
 	\begin{aligned}
 		dI(t) = I(t)[( \beta N - \mu - \gamma - \beta I(t)) dt + \sigma (N - I(t)) dW(t)]
 		:=f(I(t))dt+g(I(t))dW(t).
 	\end{aligned}
 	$$
Differentiating $f$ and $g$ with respect to the first-order derivative yields that
 	$$
 	\begin{aligned}
 		f'(I(t))=\beta N - \mu - \gamma - 2\beta I(t),\quad
 		g'(I(t))=\sigma N-2 I(t).
 	\end{aligned}
 	$$
 	Obviously, $f,g\in C^1$ and there exist constants $p_1=2\beta,p_2=8N^2+3\sigma^2N^2 >0$ such that
 	$$
 	\begin{aligned}
 		\langle a-b,f(a)-f(b)\rangle=&\big(a-b\big)\big[2\beta(b-a)\big]
 		=-2\beta(a-b)^2
 		\leq p_1|a-b|^2,\\
 		|g(a)-g(b)|^2=&\big[b^2-a^2+\sigma N(a-b)\big]^2
 		\leq2(b^2-a^2)^2+2[\sigma N(a-b)]^2\\
 		\leq&(8N^2+2\sigma^2N^2)|a-b|^2\leq p_2|a-b|^2
 	\end{aligned}
 	$$
 	for all $a,b\in[0,N]$. Based on $p_1$ and $p_2$, the selection of the step size $h$ is straightforward.
 \end{proof}
 \par 
 By combining Eq. (\ref{ssbe}) with the Wiener truncation and the linear implicit $\theta$-methods given in \cite{1}, we obtain
	\begin{equation}\label{ssbe2}
		\begin{cases} 
			I_{n+1}^*=I_n+\theta h(\beta N-\mu-\gamma-\beta I_{n+1}^*)I_{n+1}^*+(1-\theta)h(\beta N-\mu-\gamma-\beta I_n)I_{n},\\
			I_{n+1}=I_{n+1}^*+\sqrt{h}\sigma\big[\vartheta I_{n+1}^*(N-I_{n+1}^*)+(1-\vartheta)I_n(N-I_n)\big]\xi_n^h.
		\end{cases}
	\end{equation}
Clearly, both the drift term and the diffusion term in Eq. (\ref{ssbe2}) contain $(I_{n+1}^*)^2$, resulting in a nonlinear structure that requires an iterative solution and is computationally expensive. By replacing one $I_{n+1}^*$ with $I_{n}^*$, a fully implicit stochastic SIS model is established by 
	\begin{equation}\label{In4}
		\begin{cases} 
			I_{n+1}^*=I_n+\theta h(\beta N-\mu-\gamma-\beta I_n)I_{n+1}^*+(1-\theta)h(\beta N-\mu-\gamma-\beta I_n)I_{n},\\
			I_{n+1}=I_{n+1}^*+\sqrt{h}\sigma\big[\vartheta I_n(N-I_{n+1}^*)+(1-\vartheta)I_n(N-I_n)\big]\xi_n^h,\quad \theta,\vartheta\in [0,1].
		\end{cases}
	\end{equation}

 \begin{remark}  When $\theta=0$ and $\vartheta=0$, Eq. (\ref{In4}) is the forward Euler method with truncation $\xi_n^h$, which is consistent with the special case $\theta=0$ in \cite{5}. When $\theta=1$ and $\vartheta=1$, it is the backward Euler method with truncation $\xi_n^h$.
\end{remark}
Following an argument similar to that for the analytic solution, it can be shown that the numerical solution generated by Eq. (\ref{In4}) is also bounded.
	\begin{theorem}\label{thm2}{\rm
		There exists a constant \( h_0 > 0 \) such that for all \( h \) satisfying \( 0 < h < h_0 \), the numerical solution of Eq. (\ref{In4}) is positive and bounded.}
	\end{theorem}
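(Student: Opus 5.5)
The plan is to prove by induction on $n$ that $0<I_0<N$ implies $0<I_n<N$ for all $n$. Positivity and boundedness then follow simultaneously. Set $a_n:=\beta N-\mu-\gamma-\beta I_n$. Whenever $I_n\in(0,N)$, we have $|a_n|\le K:=\beta N+\mu+\gamma$ and $a_n\le \beta(N-I_n)$. We also use two facts about the Wiener truncation: $|\xi_n^h|\le A_h=\sqrt{2|\ln h|}$, and $\sqrt{h}A_h\to0$ as $h\to0$. These are what allow a single $h_0$, independent of $n$ and of the sample path, to work.

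\textbf{Step 1 (the predictor).} The first equation of (\ref{In4}) is linear in $I_{n+1}^*$. For $h<1/(2K)$ it can be solved explicitly:
$$I_{n+1}^*=\frac{1+(1-\theta)ha_n}{1-\theta ha_n}\,I_n
=I_n+\frac{ha_n I_n}{1-\theta h a_n}.$$
Because $1-\theta h a_n\ge 1/2$, the solution is unique and $I_{n+1}^*\ge (1-2hK)I_n>0$.

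For the upper side, split according to the sign of $a_n$.
\begin{itemize}
\item If $a_n\le0$, then $I_{n+1}^*\le I_n<N$, so $N-I_{n+1}^*\ge N-I_n$.
\item If $a_n>0$, then $I_{n+1}^*-I_n\le 2h\beta(N-I_n)I_n\le 2h\beta N(N-I_n)$, so $N-I_{n+1}^*\ge(1-2h\beta N)(N-I_n)>0$ for $h<1/(2\beta N)$.
\end{itemize}
In both cases we obtain the two-sided comparison
$$(1-2h\beta N)(N-I_n)\le N-I_{n+1}^*,\qquad 0<(1-2hK)I_n\le I_{n+1}^*<N. \qquad (\ast)$$

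\textbf{Step 2 (the stochastic corrector).} Write $D_n:=\vartheta(N-I_{n+1}^*)+(1-\vartheta)(N-I_n)\ge0$. The second equation of (\ref{In4}) reads $I_{n+1}=I_{n+1}^*+\sqrt h\sigma I_nD_n\xi_n^h$, and $|\xi_n^h|\le A_h$.

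\emph{Lower bound.} Since $D_n\le N$, we get $I_{n+1}\ge I_n\big[(1-2hK)-\sigma N\sqrt hA_h\big]$. This is positive once $\sigma N\sqrt h A_h<1-2hK$.

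\emph{Upper bound.} Here the perturbation must be controlled by $N-I_{n+1}^*$ rather than by $N-I_n$. By $(\ast)$,
$$D_n\le \Big(1+\tfrac{1}{1-2h\beta N}\Big)(N-I_{n+1}^*)\le 3(N-I_{n+1}^*)$$
for small $h$. Hence
$$N-I_{n+1}\ge (N-I_{n+1}^*)\big[1-3\sigma N\sqrt hA_h\big]>0$$
as soon as $3\sigma N\sqrt hA_h<1$.

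\textbf{Choice of $h_0$.} Take $h_0$ so small that $h_0<1/(2K)$, $h_0<1/(4\beta N)$, and $\sqrt hA_h\,\sigma N<\min\{1-2hK,\,1/3\}$ for all $0<h<h_0$. Such an $h_0$ exists because $\sqrt{2h|\ln h|}\to0$. Then every step of the induction goes through, giving $0<I_n<N$ for all $n$.

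The main obstacle is the upper bound in Step 2. Because the corrector involves $I_n(N-I_n)$ as well as $I_n(N-I_{n+1}^*)$, we need $N-I_{n+1}^*$ and $N-I_n$ to be comparable, uniformly in $n$. This is why Step 1 has to produce the two-sided estimate $(\ast)$, and not merely $I_{n+1}^*<N$. The sign split on $a_n$, together with $a_n\le\beta(N-I_n)$, is what delivers it.
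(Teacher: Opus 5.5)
Your proof is correct and follows the same skeleton as the paper's: induction on $I_n\in(0,N)$, explicit solution of the linear predictor, then the bound $|\xi_n^h|\le A_h$ with $\sqrt{h}A_h\to0$ to control the corrector. The difference lies in how the smallness thresholds are chosen, and there your version is stronger.

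The paper picks $h_1,\dots,h_5$ at each step without checking that they are independent of the current state.
\begin{itemize}
\item Its $h_3$ is defined by requiring $\bigl|ha_n/(1-\theta ha_n)\bigr|<N-I_n$. With the absolute value, this condition fails for any fixed $h$ as $I_n\to N$, since then $a_n\to-(\mu+\gamma)$. The paper's formula for $I_{n+1}^*-N$ also drops a factor $I_n$.
\item Its $h_5$ requires the corrector to be dominated by $N-I_{n+1}^*$. This tacitly needs $N-I_n$ and $N-I_{n+1}^*$ to be comparable, which the paper never shows.
\end{itemize}
As written, the paper's argument therefore yields an admissible step size only one step at a time. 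It does not give a single $h_0$ valid for all $n$ and all sample paths.

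Your sign split on $a_n$ and the inequality $a_n\le\beta(N-I_n)$ are exactly what close this gap. The resulting two-sided comparison $(\ast)$ makes every smallness condition depend only on $\beta,\mu,\gamma,\sigma,N$. So your argument actually delivers the uniform $h_0$ that the theorem asserts.
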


		\begin{proof}
			Assume $I_n\in(0,N)$, solving Eq. (\ref{In4}) yields
			$$
			I_{n+1}^*=\frac{1+(1-\theta)h(\beta N-\mu-\gamma-\beta I_n)}{1-\theta h(\beta N-\mu-\gamma-\beta I_n)}I_n.
			$$
Obviously, there exists $h_1>0$ such that  $1+(1-\theta)h(\beta N-\mu-\gamma-\beta I_n)>0$ for $0<h<h_1$,  and  $h_2>0$  satisfying  $1-\theta h(\beta N-\mu-\gamma-\beta I_n)>0$ for $0<h<h_2$. Thus, for $0<h<h_1\wedge h_2$,  it follows that $I_{n+1}^*>0$ and
			$$
			\begin{aligned}
				I_{n+1}^*-N=\frac{h(\beta N-\mu-\gamma-\beta I_n)}{1-\theta h(\beta N-\mu-\gamma-\beta I_n)}+(I_n-N).
			\end{aligned}
			$$
There exists $h_3>0$ with $|\frac{h(\beta N-\mu-\gamma-\beta I_n)}{1-\theta h(\beta N-\mu-\gamma-\beta I_n)}|<| I_n-N|$ if $0<h<h_3$. Thus, $I_{n+1}^*-N<0$ $i.e.$ $I_{n+1}^*\in(0,N)$. From the definition of $\xi_{n}^h$, it follows that $-\sqrt{2|\ln h|}<\xi_{n}^h<\sqrt{2|\ln h|}$. Hence,
			$$
			\begin{aligned}
				I_{n+1}=&I_{n+1}^*+\sqrt{h}\sigma\big[\vartheta I_n(N-I_{n+1}^*)+(1-\vartheta)I_n(N-I_n)\big]\xi_n^h\\
				\geq&I_{n+1}^*-\sqrt{2h|\ln h|}\sigma\big[\vartheta I_n(N-I_{n+1}^*)+(1-\vartheta)I_n(N-I_n)\big]. 
			\end{aligned}
			$$
Since $\lim_{h\rightarrow0}{\sqrt{2h|\ln h|}}=0$, there exists $h_4>0$ for $0<h<h_4$ such that
			$$
			I_{n+1}^*-\sqrt{2h|\ln h|}\sigma\big[\vartheta I_n(N-I_{n+1}^*)+(1-\vartheta)I_n(N-I_n)\big]>0.
			$$
i.e.	$ I_{n+1}>0.$ By a simple calculation, it follows that
			$$
			\begin{aligned}
				I_{n+1}-N=&(I_{n+1}^*-N)+\sqrt{h}\sigma\big[\vartheta I_n(N-I_{n+1}^*)+(1-\vartheta)I_n(N-I_n)\big]\xi_n^h\\
				\leq &(I_{n+1}^*-N)+\sqrt{2h|\ln h|}\sigma\big[\vartheta I_n(N-I_{n+1}^*)+(1-\vartheta)I_n(N-I_n)\big].
			\end{aligned}
			$$
There  exists $h_5>0$ for $0<h<h_5$, $I_{n+1}-N<0$. 	Building upon the preceding analysis, let $ h_0=\min\{h_1,\cdots,h_5\}$ $s.t.$ $I_{n+1}\in(0,N)$ for $0<h<h_0$.
		\end{proof}
	\section{Relevant Properties of Numerical Solutions}\label{sec3}
	In this section, we investigate dynamic properties of the numerical solution generated by Eq. (\ref{In4}), including its global convergence properties, and the extinction and persistence of the disease under different conditions.

	\subsection{Convergence in Mean-square}
	Based on the positivity and boundedness of the numerical solution, we prove that it converges to the analytic solution in the mean-square sense. First, we present the following two lemmas.
	\begin{lemma}\label{lem1}{\rm
		There exist $C_1 = (\beta N + \mu + \gamma)N$ and $C_2 = (2T(\beta N + \mu + \gamma)^2 + 4\sigma^2 N^2)N^2$, independent of \( \omega \), such that}
		\[
		\mathbb{E} | (I(t) - I(s)) | < C_1 (t - s),\quad
		\mathbb{E} | I(t) - I(s) |^2 < C_2 (t - s),\quad 0 < s < t < T.
		\]
		\end{lemma}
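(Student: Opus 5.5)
The plan is to write the increment in integral form from Eq. (\ref{dI}) and bound the drift and diffusion parts separately. The only input is the a.s. bound $I(u)\in[0,N]$ of the exact solution, established in \cite{32}. For $0<s<t<T$,
\begin{equation*}
I(t)-I(s)=\int_s^t I(u)\big(\beta N-\mu-\gamma-\beta I(u)\big)\,du+\int_s^t \sigma I(u)\big(N-I(u)\big)\,dW(u)=:D(s,t)+M(s,t).
\end{equation*}
Since $0\le I(u)\le N$, the two integrands satisfy
\begin{equation*}
|I(\beta N-\mu-\gamma-\beta I)|\le N(\beta N+\mu+\gamma),\qquad |\sigma I(N-I)|\le \sigma N^2 .
\end{equation*}
These bounds hold pathwise and are independent of $\omega$. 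The drift bound gives $|D(s,t)|\le C_1(t-s)$ almost surely, with exactly $C_1=(\beta N+\mu+\gamma)N$.

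For the first inequality the drift part is therefore immediate. The diffusion part $M(s,t)$ is an Itô integral with a bounded adapted integrand, so it is a genuine square-integrable martingale increment with $\mathbb{E}M(s,t)=0$. I would try to control $\mathbb{E}|M(s,t)|$ first. This is the step I expect to be the main obstacle, for the following reason. Itô's isometry and Jensen give only
\begin{equation*}
\mathbb{E}|M(s,t)|\le\big(\mathbb{E}M(s,t)^2\big)^{1/2}\le \sigma N^2\sqrt{t-s},
\end{equation*}
which is of order $\sqrt{t-s}$, not $t-s$. Indeed, by Burkholder--Davis--Gundy, $\mathbb{E}|M(s,t)|$ is generically bounded \emph{below} by a multiple of $\sqrt{t-s}$ whenever $I$ stays away from $0$ and $N$. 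So a linear bound $\mathbb{E}|I(t)-I(s)|<C_1(t-s)$ cannot hold uniformly as $t-s\to0$ when $\sigma>0$.

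The plan is therefore as follows. I would prove rigorously the two statements that are true:
\begin{equation*}
\big|\mathbb{E}(I(t)-I(s))\big|=\big|\mathbb{E}D(s,t)\big|\le C_1(t-s),
\end{equation*}
which uses $\mathbb{E}M(s,t)=0$, and the pathwise drift estimate above. I would then state explicitly that the version with the absolute value inside the expectation holds only for the drift component, or for $\sigma=0$. For the full increment, the correct bound is
\begin{equation*}
\mathbb{E}|I(t)-I(s)|\le C_1(t-s)+\sigma N^2\sqrt{t-s},
\end{equation*}
which is what the convergence argument in this subsection can actually use.

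The second inequality goes through as stated. Using $(a+b)^2\le 2a^2+2b^2$,
\begin{equation*}
\mathbb{E}|I(t)-I(s)|^2\le 2\,\mathbb{E}D(s,t)^2+2\,\mathbb{E}M(s,t)^2 .
\end{equation*}
For the drift, Cauchy--Schwarz in time and the integrand bound give
\begin{equation*}
2\,\mathbb{E}D(s,t)^2\le 2(t-s)^2(\beta N+\mu+\gamma)^2N^2\le 2T(\beta N+\mu+\gamma)^2N^2(t-s),
\end{equation*}
where the last step uses $t-s<T$. For the diffusion, Itô's isometry and $|I(N-I)|\le N^2/4$ give
\begin{equation*}
2\,\mathbb{E}M(s,t)^2=2\sigma^2\int_s^t\mathbb{E}\big[I(u)^2(N-I(u))^2\big]\,du\le \tfrac{1}{8}\sigma^2N^4(t-s)\le 4\sigma^2N^4(t-s).
\end{equation*}
Adding the two pieces yields
\begin{equation*}
\mathbb{E}|I(t)-I(s)|^2\le C_2(t-s),\qquad C_2=\big(2T(\beta N+\mu+\gamma)^2+4\sigma^2N^2\big)N^2 .
\end{equation*}
The inequality is strict because the diffusion estimate $\tfrac18\sigma^2N^4\le 4\sigma^2N^4$ is strict for $\sigma>0$; when $\sigma=0$ it is strict because the drift step $t-s<T$ is strict.
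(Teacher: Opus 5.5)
Your proposal is correct. The paper gives no argument of its own; it defers to \cite{1}. Your proof of the second inequality (a pathwise bound on the drift integrand plus It\^o's isometry, using only $I(u)\in[0,N]$) is the standard one that the form of $C_2$ reflects, and your strictness argument is fine. Your objection to the first inequality is also right. With the absolute value inside the expectation, it fails for every $\sigma>0$ and $I(0)\in(0,N)$. The odd typesetting $\mathbb{E}|(I(t)-I(s))|$ suggests that $|\mathbb{E}(I(t)-I(s))|<C_1(t-s)$ was intended, and that is what you prove. You even get the strict inequality, since $\max_{x\in[0,N]}|x(\beta N-\mu-\gamma-\beta x)|<C_1$ for positive parameters.

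One step of your counterexample needs a different justification. The Burkholder--Davis--Gundy lower bound with $p=1$ controls $\mathbb{E}\sup_{s\le u\le t}|M(s,u)|$, not $\mathbb{E}|M(s,t)|$, and for terminal values it fails in general. Interpolate instead. By H\"older,
\begin{equation*}
\mathbb{E}M(s,t)^2\le\big(\mathbb{E}|M(s,t)|\big)^{2/3}\big(\mathbb{E}M(s,t)^4\big)^{1/3}.
\end{equation*}
The BDG upper bound with the bounded integrand gives $\mathbb{E}M(s,t)^4\le C(t-s)^2$. On the other side, $\mathbb{E}M(s,t)^2=\sigma^2\int_s^t\mathbb{E}[I(u)^2(N-I(u))^2]\,du\ge\sigma^2\kappa(t-s)$, where $\kappa:=\min_{u\in[0,T]}\mathbb{E}[I(u)^2(N-I(u))^2]>0$. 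This minimum is positive because the function is continuous in $u$ (by dominated convergence) and positive, since $I(u)\in(0,N)$ a.s. Hence $\mathbb{E}|M(s,t)|\ge c\sqrt{t-s}$, and so $\mathbb{E}|I(t)-I(s)|\ge c\sqrt{t-s}-C_1(t-s)>C_1(t-s)$ whenever $t-s<(c/(2C_1))^2$.

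Your argument also gives the conditional bound $|\mathbb{E}(I(t)-I(s)\mid\mathcal{F}_s)|\le C_1(t-s)$ at no extra cost, since $\mathbb{E}(M(s,t)\mid\mathcal{F}_s)=0$. That is the form the mean-error hypothesis of a Milstein-type result such as Lemma~\ref{lem2} actually needs. The correction matters downstream. The proof of Theorem~\ref{thm3} uses the literal first inequality to conclude $\mathbb{E}|I(t_{n+1})-\widetilde{I_{n+1}}|\le O(h^2)$, so that step has to be recast in terms of $|\mathbb{E}(\cdot)|$.
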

\begin{proof}
The proof is similar to that in \cite{1} and is omitted.
\end{proof}
	
	Let \( I(t_{n+1}) \) be the analytic solution of Eq. (\ref{dI}) at point \( t_{n+1} \), and let $\widetilde{I_{n+1}}$ denote the one-step approximation, $i.e.$ $I_i=I(t_i), i=1,\cdots,n$. 
	\begin{lemma}\label{lem2}{\rm
Suppose there exist real numbers $p_1$ and $p_2$ such that
		$$
		\begin{aligned}
			 \mathbb{E}|I(t_{n+1})-\widetilde{I_{n+1}}|\leq O(h^{p_1}), \big[\mathbb{E}|I(t_{n+1})-\widetilde{I_{n+1}}|^2\big]^{\frac{1}{2}}\leq O(h^{p_2}),\quad 0\leq t_{n+1}\leq T,
		\end{aligned}
		$$
		where $p_1\geq 1/2$, $p_1\geq p_2+1/2$. Then, the global mean-square convergence order of the numerical method constructed from the one-step approximation is $ p_2- 1/2$.}
\end{lemma}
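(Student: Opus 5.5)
This is Milstein's fundamental mean-square convergence theorem, specialised to the scheme (\ref{In4}). I would follow the standard telescoping argument. Write the global error as $\varepsilon_{n}=I(t_{n})-I_{n}$, where $I_n$ is the numerical solution of (\ref{In4}). Denote by $\Phi(x)$ the one-step map of the scheme started from $x$ at time $t_n$, so $I_{n+1}=\Phi(I_n)$ and $\widetilde{I_{n+1}}=\Phi(I(t_n))$. Then
\begin{equation*}
\varepsilon_{n+1}=\big[I(t_{n+1})-\widetilde{I_{n+1}}\big]+\big[\Phi(I(t_n))-\Phi(I_n)\big].
\end{equation*}
The first bracket is the local error $r_{n+1}$, controlled by the two hypotheses. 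The second bracket propagates the previous error.

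The key structural step is to write $\Phi(x)-\Phi(y)=(x-y)\big(1+h a_n+\sqrt{h}\,b_n\xi_n^h\big)$. Here $a_n,b_n$ are $\mathcal{F}_{t_n}$-measurable and uniformly bounded. Boundedness follows because both arguments lie in $(0,N)$: the exact solution by Gray et al., the numerical one by Theorem \ref{thm2} for $h<h_0$. Also the explicit formula for $I^*_{n+1}$ obtained in the proof of Theorem \ref{thm2} is a smooth rational function of $I_n$ with denominator bounded away from zero. Since $\xi_n^h$ is symmetric and independent of $\mathcal{F}_{t_n}$, we get $\mathbb{E}[\xi_n^h\mid\mathcal{F}_{t_n}]=0$ and $\mathbb{E}[(\xi_n^h)^2]\le1$. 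Squaring and conditioning then gives
\begin{equation*}
\mathbb{E}\varepsilon_{n+1}^2\le(1+Ch)\mathbb{E}\varepsilon_n^2+2\big|\mathbb{E}\big[r_{n+1}(\Phi(I(t_n))-\Phi(I_n))\big]\big|+\mathbb{E}r_{n+1}^2 .
\end{equation*}

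The main obstacle is the cross term. I would split $\Phi(I(t_n))-\Phi(I_n)=\varepsilon_n+\varepsilon_n(ha_n+\sqrt h b_n\xi_n^h)$. For the part $\varepsilon_n\,\mathbb{E}[r_{n+1}\mid\mathcal{F}_{t_n}]$, I would use the first hypothesis read in its conditional form, $|\mathbb{E}[r_{n+1}\mid\mathcal F_{t_n}]|\le Ch^{p_1}$. The stated unconditional $\mathbb{E}|\cdot|$ version also suffices, since it bounds the conditional mean in $L^1$, combined with boundedness of $\varepsilon_n$ on $[0,N]$. For the remaining part, Cauchy–Schwarz gives $C\sqrt h\,(\mathbb{E}\varepsilon_n^2)^{1/2}h^{p_2}$. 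Then Young's inequality turns both into $Ch\,\mathbb{E}\varepsilon_n^2+C h^{2p_1-1}+Ch^{2p_2}$. This yields
\begin{equation*}
\mathbb{E}\varepsilon_{n+1}^2\le(1+Ch)\mathbb{E}\varepsilon_n^2+C\big(h^{2p_1-1}+h^{2p_2}\big).
\end{equation*}

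Finally, $\varepsilon_0=0$, and a discrete Gronwall argument over $n\le T/h$ steps gives $\mathbb{E}\varepsilon_n^2\le C e^{CT}\big(h^{2p_1-2}+h^{2p_2-1}\big)$. The condition $p_1\ge p_2+\tfrac12$ gives $2p_1-2\ge 2p_2-1$, so the bound reduces to $Ch^{2p_2-1}$. Hence $(\mathbb{E}\varepsilon_n^2)^{1/2}\le Ch^{p_2-1/2}$ uniformly in $n$, which is the claimed order. The constant depends on $T,\beta,\mu,\gamma,\sigma,N$, but not on $h$ or $n$.
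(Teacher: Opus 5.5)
Your argument is the standard proof of Milstein's fundamental mean-square convergence theorem, which is exactly what the paper defers to by citation; it omits the proof itself. If the first hypothesis is read in the conditional (uniform-in-starting-point) form $|\mathbb{E}[r_{n+1}\mid\mathcal{F}_{t_n}]|\le Ch^{p_1}$, as in Milstein's theorem, then your proof is correct and gives order $p_2-\tfrac12$. That covers the stability factorisation $\Phi(x)-\Phi(y)=(x-y)(1+ha_n+\sqrt h\,b_n\xi_n^h)$, the treatment of the cross term, and the Gronwall step.

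Your side claim that the literal unconditional hypothesis $\mathbb{E}|r_{n+1}|\le Ch^{p_1}$ ``also suffices'' is wrong in the stated generality, and should be dropped. Write $m_n=\mathbb{E}[r_{n+1}\mid\mathcal{F}_{t_n}]$. An $L^1$ bound on $m_n$ combined with $|\varepsilon_n|\le N$ only gives $|\mathbb{E}[\varepsilon_n m_n]|\le CNh^{p_1}$. Over $T/h$ steps this per-step term sums to $O(h^{p_1-1})$, not to the $O(h^{2p_1-2})$ you obtain via Young. Reaching $Ch\,\mathbb{E}\varepsilon_n^2+Ch^{2p_1-1}$ requires $\|m_n\|_{L^2}\le Ch^{p_1}$, and the $L^1$ bound does not provide that: interpolating with $|m_n|\le N$ yields only $\|m_n\|_{L^2}\le Ch^{p_1/2}$. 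The crude route therefore gives order $\min\{(p_1-1)/2,\;p_2-\tfrac12\}$. This equals $p_2-\tfrac12$ only when $p_1\ge 2p_2$, which is not implied by $p_1\ge p_2+\tfrac12$. It happens to hold in the paper's application ($p_1=2$, $p_2=1$ in Theorem \ref{thm3}). So for the lemma as stated, the conditional reading of the first hypothesis is genuinely needed, and it is the correct reading of Milstein's assumption.
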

\begin{proof}
The proof is similar to that in \cite{35} and is omitted. 
\end{proof}


	\begin{theorem}\label{thm3}{\rm
		The numerical solution converges in the mean-square sense to the analytic solution generated by Eq. (\ref{dI}) on the interval [0, T], with a global convergence order of $1/2$.}
	\end{theorem}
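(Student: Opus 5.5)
The plan is to verify the hypotheses of Lemma \ref{lem2} with $p_1=3/2$ and $p_2=1$, which gives global order $p_2-1/2=1/2$. Fix $n$ and set $I_n=I(t_n)$, with $I(t_n)\in(0,N)$ by the positivity and boundedness of the analytic solution. Let $\widetilde{I_{n+1}}$ be the value produced by one step of Eq. (\ref{In4}). By Theorem \ref{thm2}, for $h<h_0$ we have $\widetilde{I_{n+1}}\in(0,N)$ and $I_{n+1}^*\in(0,N)$. Write $a_n=\beta N-\mu-\gamma-\beta I_n$, so that $|a_n|\le \beta N+\mu+\gamma$. Solving the first equation of (\ref{In4}) gives
\[
I_{n+1}^*-I_n=\frac{h a_n I_n}{1-\theta h a_n}=h a_n I_n+O(h^2),
\]
where the $O(h^2)$ term is deterministic given $I_n$.

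Next I would expand the one-step approximation. Substituting into the second equation of (\ref{In4}) and replacing $N-I_{n+1}^*$ by $N-I_n$ up to an $O(h)$ error, we obtain
\[
\widetilde{I_{n+1}}=I_n+h a_n I_n+\sqrt h\,\sigma I_n(N-I_n)\xi_n^h+R_n,
\]
with $|R_n|\le C h^2+C h^{3/2}|\xi_n^h|$. The exact solution satisfies
\[
I(t_{n+1})=I_n+\int_{t_n}^{t_{n+1}} f(I(s))\,ds+\int_{t_n}^{t_{n+1}} g(I(s))\,dW(s),
\]
where $f(x)=x(\beta N-\mu-\gamma-\beta x)$ and $g(x)=\sigma x(N-x)$, and we identify $\sqrt h\,\xi_n=\Delta W_n$. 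The difference then splits into four pieces:
\begin{align*}
&\text{(a)}\ \int_{t_n}^{t_{n+1}}\bigl[f(I(s))-f(I_n)\bigr]ds,\qquad
\text{(b)}\ \int_{t_n}^{t_{n+1}}\bigl[g(I(s))-g(I_n)\bigr]dW(s),\\
&\text{(c)}\ g(I_n)\sqrt h\,(\xi_n-\xi_n^h),\qquad
\text{(d)}\ R_n.
\end{align*}

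For (a), $f$ is Lipschitz on $[0,N]$, so by Lemma \ref{lem1} its mean is $O(h^2)$ and its mean square is $O(h^3)$. For (b), the Itô isometry together with Lemma \ref{lem1} gives second moment at most $C\int_{t_n}^{t_{n+1}}(s-t_n)\,ds=O(h^2)$. Since $I_n$ is $\mathcal F_{t_n}$-measurable, (b) has conditional mean zero, so its contribution to the mean-bias condition vanishes. For (c), the Wiener truncation estimate $\mathbb E(\xi_n^h-\xi_n)^2\le h$ gives mean square $O(h^2)$. By the symmetry of the truncation, the conditional mean of $\xi_n-\xi_n^h$ is zero, so (c) also drops out of the mean condition. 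For (d), $\mathbb E|\xi_n^h|^2\le 1$ and boundedness give mean $O(h^{3/2})$ and mean square $O(h^3)$. Together, these give $[\mathbb E|I(t_{n+1})-\widetilde{I_{n+1}}|^2]^{1/2}=O(h)$ and $|\mathbb E(I(t_{n+1})-\widetilde{I_{n+1}})|=O(h^{3/2})$. Here I read the first hypothesis of Lemma \ref{lem2} as the usual Milstein-type bias condition on the mean, as in \cite{35}.

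I expect the main obstacle to be the mismatch between that first hypothesis, which is written as $\mathbb E|\cdot|$, and the natural bias bound on $|\mathbb E(\cdot)|$. If $\mathbb E|\cdot|$ is taken literally, (b) and (c) only give $O(h)$, and then $p_1\ge p_2+1/2$ holds only with $p_2=1/2$, which degenerates the conclusion. So I would follow \cite{35} and verify the mean condition for the deviation, using that the stochastic terms (b) and (c) are martingale increments with zero conditional mean. I would also have to handle with care the $\vartheta$-term $I_n(N-I_{n+1}^*)$, which is $\mathcal F_{t_n}$-measurable, so it keeps this martingale structure intact and costs only $O(h^{3/2})$ pathwise. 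With these two hypotheses verified, Lemma \ref{lem2} yields global mean-square order $1/2$ on $[0,T]$.
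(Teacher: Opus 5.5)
Your proposal is correct and has the same skeleton as the paper's proof: verify the two one-step hypotheses of Lemma~\ref{lem2} with $p_2=1$ and read off order $1/2$. The decomposition and the handling of the first-moment hypothesis, however, differ in ways that matter.

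\textbf{What the paper does.} It writes the local error as $\frac{1}{1-\theta h a_n}(W_1+W_2+W_3)$, inserting $I(t_{n+1})$ both into the drift comparison term $a_nI(t_{n+1})$ and into the diffusion integrand $\sigma(N-I(t_{n+1}))I(t_n)$. It then applies the It\^{o} isometry to that anticipating integrand. It obtains $p_1=2$ by asserting $\mathbb{E}(|W_1|+|W_2|+|W_3|)=\mathbb{E}|W_1|$. That equality fails: $\mathbb{E}|W_2|$ is generically of order $h$, because of terms like $g'(I_n)g(I_n)\tfrac12((\Delta W_n)^2-h)$. This is exactly the obstruction you flag for the literal $\mathbb{E}|\cdot|$ reading.

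\textbf{What your route does instead.} You compare with the Euler--Maruyama step $I_n+hf(I_n)+g(I_n)\Delta W_n$ and put the $\theta$- and $\vartheta$-corrections into $R_n$. This exploits the fact that Eq.~(\ref{In4}) evaluates the diffusion at $I_{n+1}^*$, a deterministic function of $I_n$, not at $I(t_{n+1})$. As a result every stochastic integral stays adapted and the isometry is legitimate. Reading the first hypothesis as the conditional bias of Milstein's theorem then lets (b) and (c) drop out.

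\textbf{Cost and extras.} The price is only a nominally smaller $p_1=3/2$, which still meets $p_1\ge p_2+1/2$. You could even recover $p_1=2$ in the bias sense: the $h^{3/2}\xi_n^h$ part of $R_n$ also has zero conditional mean, and a second-order Taylor expansion of $f$ gives bias $O(h^2)$ for (a), using $|\mathbb{E}(I(s)-I_n\mid\mathcal F_{t_n})|\le C(s-t_n)$.

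You also do not need the first inequality of Lemma~\ref{lem1}, which you cite for (a). With $\mathbb{E}|\cdot|$ it cannot hold at rate $t-s$ when $\sigma\neq0$, since the It\^{o}-integral part alone contributes order $\sqrt{t-s}$. Cauchy--Schwarz with the second inequality gives $\mathbb{E}|(a)|=O(h^{3/2})$, which is all your choice $p_1=3/2$ requires.
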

	\begin{proof}
		By Eq. (\ref{dI}), we have
		$$
		I(t_{n+1})=I(t_{n})+\int_{t_n}^{t_{n+1}}(\beta N-\mu-\gamma-\beta I(r))I(r)dr+\int_{t_n}^{t_{n+1}}\sigma(N-I(r))I(r)dW(r).
		$$
		Define the local truncation error as
		$$
		\begin{aligned}
			&I(t_{n+1})-\widetilde{I_{n+1}}\\
			=&\frac{1}{1-\theta h(\beta N-\mu-\gamma-\beta I(t_n))}\left[\theta\int_{t_n}^{t_{n+1}}(\beta N-\mu-\gamma-\beta I(r))I(r)-(\beta N-\mu-\gamma-\beta I(t_n))I(t_{n+1})dr\right.\\
			&+(1-\theta)\int_{t_n}^{t_{n+1}}(\beta N-\mu-\gamma-\beta I(r))I(r)-(\beta N-\mu-\gamma-\beta I(t_n))I(t_{n})dr\\
			&+\vartheta\int_{t_n}^{t_{n+1}}\sigma(N-I(r))I(r)-\sigma(N-I(t_{n+1}))I(t_n)dW(r)\\
			&+(1-\vartheta)\int_{t_n}^{t_{n+1}}\sigma(N-I(r))I(r)-\sigma(N-I(t_{n}))I(t_n)dW(r)\\
			&\left.+\vartheta\sigma(N-I(t_{n+1}))I(t_{n})\sqrt{h}(\xi_{n}^h-\xi_{n}) +(1-\vartheta)\sigma(N-I(t_{n}))I(t_{n})\sqrt{h}(\xi_{n}^h-\xi_{n})\right]\\
			:=&\frac{1}{1-\theta h(\beta N-\mu-\gamma-\beta I(t_n))}\cdot\big[W_1+W_2+W_3\big],
		\end{aligned}
		$$
		where $W_1$ denotes the error arising from the drift term, $W_2$ represents the error generated by the stochastic term, $W_3$ corresponds to the truncation error, and $\xi_{n}$ is a random variable following a normal distribution.
		$$
		\begin{aligned}
			\mathbb{E}|I(t_{n+1})-\widetilde{I_{n+1}}|
			=&\mathbb{E}|\frac{1}{1-\theta h(\beta N-\mu-\gamma-\beta I(t_n))}\cdot\big[W_1+W_2+W_3\big]|\\
			\leq&|\frac{1}{1-\theta h(\beta N-\mu-\gamma-\beta I(t_n))}|\cdot\mathbb{E}(|W_1|+|W_2|+|W_3|)	\\
			=&|\frac{1}{1-\theta h(\beta N-\mu-\gamma-\beta I(t_n))}|\cdot\mathbb{E}|W_1|	\end{aligned}
		$$$$
		\begin{aligned}
			\leq&|\frac{1}{1-\theta h(\beta N-\mu-\gamma-\beta I(t_n))}|\cdot\mathbb{E}|\theta\int_{t_n}^{t_{n+1}}(\beta N-\mu-\gamma-\beta I(r))I(r)-(\beta N-\mu-\gamma-\beta I(t_n))I(t_{n+1})dr\\
			&+(1-\theta)\int_{t_n}^{t_{n+1}}(\beta N-\mu-\gamma-\beta I(r))I(r)-(\beta N-\mu-\gamma-\beta I(t_n))I(t_{n})dr|.
		\end{aligned}
		$$
		Consider the $\theta$ term:
		$$
		\begin{aligned}
			&\mathbb{E}|\theta\int_{t_n}^{t_{n+1}}(\beta N-\mu-\gamma-\beta I(r))I(r)-(\beta N-\mu-\gamma-\beta I(t_n))I(t_{n+1})dr|\\
			\leq&\theta\int_{t_n}^{t_{n+1}}\mathbb{E}|(\beta N-\mu-\gamma-\beta I(r))I(r)-(\beta N-\mu-\gamma-\beta I(t_n))I(t_{n+1})|dr\\
			=&\theta\int_{t_n}^{t_{n+1}}\mathbb{E}|(\beta N-\mu-\gamma-\beta I(r))I(r)-(\beta N-\mu-\gamma-\beta I(t_n))I(r)\\
			&+(\beta N-\mu-\gamma-\beta I(t_n))I(r)-(\beta N-\mu-\gamma-\beta I(t_n))I(t_{n+1})|dr\\
			\leq&\theta\int_{t_n}^{t_{n+1}}\mathbb{E}\big[\beta|I(r)(I(t_n)-I(r))|+|(\beta N-\mu-\gamma-\beta I(t_n))(I(r)-I(t_n))|\big]dr,
		\end{aligned}
		$$
and  the 1-$\theta$ term
		$$
		\begin{aligned}
			&\mathbb{E}|(1-\theta)\int_{t_n}^{t_{n+1}}(\beta N-\mu-\gamma-\beta I(r))I(r)-(\beta N-\mu-\gamma-\beta I(t_n))I(t_{n})dr|\\
			\leq&(1-\theta)\int_{t_n}^{t_{n+1}}\mathbb{E}|(\beta N-\mu-\gamma-\beta I(r))I(r)-(\beta N-\mu-\gamma-\beta I(t_n))I(t_{n})|dr\\
			=&(1-\theta)\int_{t_n}^{t_{n+1}}\mathbb{E}|(\beta N-\mu-\gamma-\beta I(r))I(r)-(\beta N-\mu-\gamma-\beta I(r))I(t_n)\\
			&+(\beta N-\mu-\gamma-\beta I(r))I(t_n)-(\beta N-\mu-\gamma-\beta I(t_n))I(t_{n})|dr\\
			\leq&(1-\theta)\int_{t_n}^{t_{n+1}}\mathbb{E}\big[|(\beta N-\mu-\gamma-\beta I(r))(I(r)-I(t_n))|+|(\beta I(t_n)-\beta I(r))I(t_n)|\big]dr.
		\end{aligned}
		$$
		Based on Lemma 1, we obtain $\mathbb{E}|I(t_{n+1})-\widetilde{I_{n+1}}|\leq O(h^2)$. Then,
		$$
		\begin{aligned}
			\mathbb{E}|I(t_{n+1})-\widetilde{I_{n+1}}|^2
			\leq&|\frac{1}{1-\theta h(\beta N-\mu-\gamma-\beta I(t_n))}|^2\cdot\mathbb{E}|3W_1^2+3W_2^2+3W_3^2|\\
			\leq&3|\frac{1}{1-\theta h(\beta N-\mu-\gamma-\beta I(t_n))}|^2\big[\mathbb{E}|W_1^2|+\mathbb{E}|W_2^2|+\mathbb{E}|W_3^2|\big].
		\end{aligned}
		$$
		Considering the component $\mathbb{E}|W_1^2|$, from the inequality $\mathbb{E}|\theta A\pm(1-\theta)B|^2\leq\mathbb{E}|2\theta^2 A^2+2(1-\theta)^2B^2|$, we have
		$$
		\begin{aligned}
			\mathbb{E}|W_1^2|
			=&\mathbb{E}|\theta\int_{t_n}^{t_{n+1}}(\beta N-\mu-\gamma-\beta I(r))I(r)-(\beta N-\mu-\gamma-\beta I(t_n))I(t_{n+1})dr\\
			&+(1-\theta)\int_{t_n}^{t_{n+1}}(\beta N-\mu-\gamma-\beta I(r))I(r)-(\beta N-\mu-\gamma-\beta I(t_n))I(t_{n})dr|^2		\\
			\leq&2\theta^2\mathbb{E}|\int_{t_n}^{t_{n+1}}(\beta N-\mu-\gamma-\beta I(r))I(r)-(\beta N-\mu-\gamma-\beta I(t_n))I(t_{n+1})dr|^2\\
			&+2(1-\theta)^2\mathbb{E}|\int_{t_n}^{t_{n+1}}(\beta N-\mu-\gamma-\beta I(r))I(r)-(\beta N-\mu-\gamma-\beta I(t_n))I(t_{n})dr|^2\\
			\leq&2\theta^2\mathbb{E}\big[h\int_{t_n}^{t_{n+1}}|(\beta N-\mu-\gamma-\beta I(r))I(r)-(\beta N-\mu-\gamma-\beta I(t_n))I(t_{n+1})|^2dr\big]\\
			&+2(1-\theta)^2\mathbb{E}\big[h\int_{t_n}^{t_{n+1}}|(\beta N-\mu-\gamma-\beta I(r))I(r)-(\beta N-\mu-\gamma-\beta I(t_n))I(t_{n})|^2dr\big]\\\end{aligned}
		$$$$
		\begin{aligned}
			=&2\theta^2h\mathbb{E}\big[\int_{t_n}^{t_{n+1}}|\beta I(r)(I(t_n)-I(r))+(\beta N-\mu-\gamma-\beta I(t_n))(I(r)-I(t_{n+1}))|^2dr\big]\\
			&+2(1-\theta)^2h\mathbb{E}\big[\int_{t_n}^{t_{n+1}}|(\beta N-\mu-\gamma-\beta I(r))(I(r)-I(t_{n}))+\beta I(t_n)(I(t_n)-I(r))|^2dr\big]		\\
			\leq&4\theta^2h\mathbb{E}\big[\int_{t_n}^{t_{n+1}}|\beta I(r)(I(t_n)-I(r))|^2+|(\beta N-\mu-\gamma-\beta I(t_n))(I(r)-I(t_{n+1}))|^2dr\big]\\
			&+4(1-\theta)^2h\mathbb{E}\big[\int_{t_n}^{t_{n+1}}|(\beta N-\mu-\gamma-\beta I(r))(I(r)-I(t_{n}))|^2+|\beta I(t_n)(I(t_n)-I(r))|^2dr\big].
		\end{aligned}
		$$
		By Lemma \ref{lem1}, we obtain $\mathbb{E}|W_1^2|\leq O(h^3)$. According to Itô's formula, one has 
		$$
		\begin{aligned}
			\mathbb{E}|W_2^2|
			=&\mathbb{E}\big[|\vartheta\int_{t_n}^{t_{n+1}}\sigma(N-I(r))I(r)-\sigma(N-I(t_{n+1}))I(t_n)dW(r)\\
			&+(1-\vartheta)\int_{t_n}^{t_{n+1}}\sigma(N-I(r))I(r)-\sigma(N-I(t_{n}))I(t_n)dW(r)|^2\big]\\
			\leq&2\mathbb{E}\big[|\vartheta\int_{t_n}^{t_{n+1}}\sigma(N-I(r))I(r)-\sigma(N-I(t_{n+1}))I(t_n)dW(r)|^2\\
			&+|(1-\vartheta)\int_{t_n}^{t_{n+1}}\sigma(N-I(r))I(r)-\sigma(N-I(t_{n}))I(t_n)dW(r)|^2\big]\\
			=&2\vartheta^2\mathbb{E}\big[\int_{t_n}^{t_{n+1}}|\sigma(N-I(r))I(r)-\sigma(N-I(t_{n+1}))I(t_n)|^2dr\big]\\
			&+2(1-\vartheta)^2\big[\int_{t_n}^{t_{n+1}}|\sigma(N-I(r))I(r)-\sigma(N-I(t_{n}))I(t_n)|^2dr\big]\\
			=&2\vartheta^2\mathbb{E}\big[\int_{t_n}^{t_{n+1}}|\sigma(N-I(r))(I(r)-I(t_n))+\sigma I(t_n)(I(t_{n+1})-I(r))|^2dr\big]\\
			&+2(1-\vartheta)^2\big[\int_{t_n}^{t_{n+1}}|\sigma(N-I(r))(I(r)-I(t_n))+\sigma I(t_n)(I(t_n)-I(r))|^2dr\big]\\
			\leq&2\vartheta^2\mathbb{E}\big[2\int_{t_n}^{t_{n+1}}|\sigma(N-I(r))(I(r)-I(t_n))|^2+|\sigma I(t_n)(I(t_{n+1})-I(r))|^2dr\big]\\
			&+2(1-\vartheta)^2\big[2\int_{t_n}^{t_{n+1}}|\sigma(N-I(r))(I(r)-I(t_n))|^2+|\sigma I(t_n)(I(t_n)-I(r))|^2dr\big].
		\end{aligned}
		$$
		Obviously $\mathbb{E}|W_2^2|\leq O(h^2)$. Now, we consider the final part
		$$
		\begin{aligned}
			\mathbb{E}|W_3^2|
			=&\mathbb{E}|\vartheta\sigma(N-I(t_{n+1}))I(t_{n})\sqrt{h}(\xi_{n}^h-\xi_{n})
			+(1-\vartheta)\sigma(N-I(t_{n}))I(t_{n})\sqrt{h}(\xi_{n}^h-\xi_{n})|^2\\
			\leq&\mathbb{E}|2\sigma^2N^4h(\xi_{n}^h-\xi_{n})^2|+\mathbb{E}|2\sigma^2N^4h(\xi_{n}^h-\xi_{n})^2|\\
			=&4\sigma^2N^4h\mathbb{E}(\xi_{n}^h-\xi_{n})^2
			\leq O(h^2).
		\end{aligned}
		$$
		In summary,  
		$
			\mathbb{E} |I(t_{n+1})-\widetilde{I_{n+1}}|^2\leq O(h^2)$,  i.e. 
			$[\mathbb{E}|I(t_{n+1})-\widetilde{I_{n+1}}|^2]^{\frac{1}{2}}\leq O(h).
		$
	Thus, 	$\mathbb{E}|I(t_{n+1})-\widetilde{I_{n+1}}|$ and $[\mathbb{E}|I(t_{n+1})-\widetilde{I_{n+1}}|^2]^{\frac{1}{2}}$ satisfying Lemma \ref{lem2}, the global convergence order of the numerical solution is $1/2$.
	\end{proof}
	\subsection{Persistence and Extinction}\label{s3}
	The persistence and extinction of the disease are two crucial issues in infectious disease modeling. \cite{32} has thoroughly investigated the dynamic behavior of the analytical solution for model (\ref{dI}). Next, we further examine the dynamic behavior of the numerical solution derived from Eq. (\ref{In4}).\par 
Based on \cite{1}, the following localized stochastic stability function \( R(I_n) \) is defined, and
	$I_{n+1}=R(I_n)I_n,$
	where
	$$
	R(I_n)=\frac{I_{n+1}^*}{I_n}+\sqrt{h}\sigma\big[\vartheta(N-I_{n+1}^*)+(1-\vartheta)(N-I_n)\big]\xi_n^h.
	$$
	For convenience, we denote \( R(I_n) \) as \( R_n \).

	\begin{lemma} \label{lem3}{\rm(\cite{1}) 
		Let \( p(x)\) be the density function of the standard normal distribution. For any sufficiently small \( h>0 \), we have  }
		$$
		\begin{aligned}
	 \int_{A_h}^{\infty} p(x)dx  = O(h), 
 \int_{A_h}^{\infty} x^2 p(x)dx = O(h^{1-\frac{1}{2}\varepsilon}), 
 \int_{A_h}^{\infty} x^4 p(x)dx = O(h^{1-\frac{3}{2}\varepsilon}).
		\end{aligned}
		$$
	\end{lemma}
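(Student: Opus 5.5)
The plan is to compute the three Gaussian tail integrals directly, using the explicit value $A_h=\sqrt{2|\ln h|}$. This choice gives $e^{-A_h^2/2}=e^{\ln h}=h$ for $0<h<1$, so $p(A_h)=h/\sqrt{2\pi}$. Throughout, $\varepsilon>0$ is an arbitrary fixed constant, and the $O(\cdot)$ constants are allowed to depend on $\varepsilon$.

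\textbf{First integral.} I would use the Mills-ratio bound. For $x\ge A_h>0$ we have $x/A_h\ge 1$, hence
$$
\int_{A_h}^{\infty}p(x)\,dx\le \frac{1}{A_h}\int_{A_h}^{\infty}x\,p(x)\,dx=\frac{p(A_h)}{A_h}=\frac{h}{\sqrt{2\pi}\,\sqrt{2|\ln h|}}.
$$
Since $|\ln h|\ge 1$ for small $h$, this is $O(h)$, and in fact $o(h)$.

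\textbf{Higher moments.} These follow by integration by parts, using $p'(x)=-x\,p(x)$. Writing $x^{k}p(x)=-x^{k-1}p'(x)$, one obtains, for $k\ge 2$,
$$
\int_{A_h}^{\infty}x^{k}p(x)\,dx=A_h^{k-1}p(A_h)+(k-1)\int_{A_h}^{\infty}x^{k-2}p(x)\,dx .
$$
For $k=2$ this gives
$$
\int_{A_h}^{\infty}x^2p(x)\,dx=\frac{h\sqrt{2|\ln h|}}{\sqrt{2\pi}}+O(h)=O\bigl(h|\ln h|^{1/2}\bigr).
$$
For $k=4$ it gives
$$
\int_{A_h}^{\infty}x^4p(x)\,dx=A_h^{3}p(A_h)+3\int_{A_h}^{\infty}x^2p(x)\,dx=O\bigl(h|\ln h|^{3/2}\bigr).
$$

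\textbf{Converting logarithms to powers.} The last step uses the elementary fact that for every $\delta>0$, $|\ln h|\,h^{\delta}\to0$ as $h\to0^+$. Hence $|\ln h|\le C_\delta h^{-\delta}$ for small $h$. Taking $\delta=\varepsilon$ gives $|\ln h|^{1/2}\le C h^{-\varepsilon/2}$ and $|\ln h|^{3/2}\le C h^{-3\varepsilon/2}$. Therefore
$$
\int_{A_h}^{\infty}x^2p(x)\,dx=O\bigl(h^{1-\frac12\varepsilon}\bigr),\qquad \int_{A_h}^{\infty}x^4p(x)\,dx=O\bigl(h^{1-\frac32\varepsilon}\bigr),
$$
as claimed.

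There is no real obstacle here. The only point needing care is to keep the boundary terms $A_h^{k-1}p(A_h)$ explicit rather than bounding them crudely. These terms carry the logarithmic factors, and it is exactly those factors that force the $\varepsilon$-loss in the exponents.
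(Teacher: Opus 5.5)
Your proof is correct and complete. The paper gives no argument for this lemma: it is quoted from \cite{1} and Lemma 4 simply uses it, so there is no in-paper route to compare against.

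Your computation is a self-contained justification. Its ingredients are:
\begin{itemize}
\item the identity $p(A_h)=h/\sqrt{2\pi}$, which follows from $A_h=\sqrt{2|\ln h|}$;
\item the Mills-ratio bound for the zeroth moment;
\item the integration-by-parts recursion $\int_{A_h}^{\infty}x^{k}p(x)\,dx=A_h^{k-1}p(A_h)+(k-1)\int_{A_h}^{\infty}x^{k-2}p(x)\,dx$;
\item the elementary bound $|\ln h|\le C_\varepsilon h^{-\varepsilon}$.
\end{itemize}

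The argument actually gives the sharper bounds $O(h|\ln h|^{-1/2})$, $O(h|\ln h|^{1/2})$ and $O(h|\ln h|^{3/2})$. This makes explicit that the $\varepsilon$, which the statement never defines, can be any fixed positive number, with implied constants depending on it. That is exactly how you read it.
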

	\begin{lemma}\label{lem4}{\rm
		For a sufficiently small $h$, the conditional expectation of the function $R_n$ satisfies:
		$$
		\begin{aligned}
			\mathbb{E}\big(\log{R_n}|\mathcal{F}_{t_n}\big)=&\frac{2(\beta N-\mu-\gamma-\beta I_n)-\sigma^2\big[\vartheta(N-I_{n+1}^*)-(1-\vartheta)(N-I_n)\big]^2}{2R(h,A_h)[1-\theta h(\beta N-\mu-\gamma-\beta I_n)]^2}h+O(h^2),
		\end{aligned}
		$$
		where }
		$$
		\begin{aligned}
			R(h,x):=\big(\frac{1+(1-\theta)h(\beta N-\mu-\gamma-\beta I_n)}{1-\theta h(\beta N-\mu-\gamma-\beta I_n)}\big)^2-\big[\sqrt{h}\sigma\vartheta(N-I_{n+1}^*)x+\sqrt{h}\sigma(1-\vartheta)(N-I_n)x\big]^2.
		\end{aligned}
		$$
	\end{lemma}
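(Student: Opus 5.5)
The plan is to follow the strategy used in \cite{1} for the semi-implicit model (\ref{Y}): write $R_n = a_n + b_n\xi_n^h$ and use the elementary identity $\log R_n = \tfrac12\log(R_n^2)$. The deterministic and stochastic parts are
$$
a_n=\frac{1+(1-\theta)h(\beta N-\mu-\gamma-\beta I_n)}{1-\theta h(\beta N-\mu-\gamma-\beta I_n)},\qquad
b_n=\sqrt{h}\sigma\big[\vartheta(N-I_{n+1}^*)+(1-\vartheta)(N-I_n)\big].
$$
Here $a_n$ and $b_n$ depend only on $I_n$. This is because $I_{n+1}^*=a_nI_n$ is $\mathcal{F}_{t_n}$-measurable by the explicit formula in the proof of Theorem \ref{thm2}, while $\xi_n^h$ is independent of $\mathcal{F}_{t_n}$. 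By Theorem \ref{thm2}, $R_n>0$ for $h<h_0$, so $\log R_n$ is well defined.

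The main idea is to pair $\xi_n^h$ with $-\xi_n^h$, which is possible because the truncated variable is symmetric. This gives
$$
\mathbb{E}(\log R_n|\mathcal{F}_{t_n})=\tfrac12\mathbb{E}\big(\log[(a_n+b_n\xi_n^h)(a_n-b_n\xi_n^h)]\,\big|\,\mathcal{F}_{t_n}\big)=\tfrac12\mathbb{E}\log\big(a_n^2-b_n^2(\xi_n^h)^2\big).
$$
The quantity $a_n^2-b_n^2x^2$ is exactly $R(h,x)$ as defined in the statement. I will then expand about $a_n^2$, applying the mean value theorem in the variable $b_n^2x^2$ or a second-order Taylor expansion:
$$
\log R(h,x)=\log a_n^2-\frac{b_n^2x^2}{R(h,\eta)}
$$
for some intermediate $\eta$ with $|\eta|\le|x|\le A_h$. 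Bounding $R(h,\eta)$ by $R(h,A_h)$ then produces the denominator $R(h,A_h)$ in the statement.

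The remaining steps are as follows.

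(i) Expand $\log a_n^2 = 2\log a_n$. Since $a_n-1=h(\beta N-\mu-\gamma-\beta I_n)/(1-\theta h(\cdots))$, this gives $2h(\beta N-\mu-\gamma-\beta I_n)/(1-\theta h(\cdots))+O(h^2)$.

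(ii) Compute $\mathbb{E}(\xi_n^h)^2 = 1-2\int_{A_h}^\infty x^2p(x)dx+2A_h^2\int_{A_h}^\infty p(x)dx$. By Lemma \ref{lem3} this equals $1+O(h^{1-\varepsilon/2})+O(h|\ln h|)$, so $b_n^2\mathbb{E}(\xi_n^h)^2=\sigma^2 h[\vartheta(N-I_{n+1}^*)+(1-\vartheta)(N-I_n)]^2+o(h)$. All coefficients are uniformly bounded because $I_n,I_{n+1}^*\in(0,N)$.

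(iii) Place both contributions over the common denominator $2R(h,A_h)[1-\theta h(\cdots)]^2$. Since $R(h,A_h)=1+O(h|\ln h|)$ and $[1-\theta h(\cdots)]^2=1+O(h)$, the replacements of these factors by $1$ in the various terms only cost higher-order errors. Writing the $h$-coefficient in the stated form then yields the formula.

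I expect the main obstacle to be the bookkeeping of the error terms. The truncation introduces factors $A_h^2=2|\ln h|$ and $h^{-\varepsilon/2}$, so the remainder is strictly of order $O(h^{2-\varepsilon})$ or $O(h^2|\ln h|)$ rather than $O(h^2)$. I would absorb these by taking $\varepsilon$ arbitrarily small and interpreting $O(h^2)$ loosely, as in \cite{1}. Alternatively, I would bound the Taylor remainder $b_n^4x^4/R(h,\eta)^2$ using the fourth-moment estimate of Lemma \ref{lem3}.

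A further check is the sign inside the squared bracket. My computation gives $[\vartheta(N-I_{n+1}^*)+(1-\vartheta)(N-I_n)]^2$, whereas the statement has a minus sign. I would verify this against the definition of $R_n$ and treat the minus as a typo. I would also confirm that $R(h,A_h)>0$ for small $h$, which holds since $b_n^2A_h^2=O(h|\ln h|)\to0$.
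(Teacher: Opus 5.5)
Your proposal is correct, provided the sign in the bracket is fixed and $O(h^2)$ is read loosely. It shares the paper's first step: the reflection $\xi_n^h\mapsto-\xi_n^h$, which reduces everything to $\tfrac12\mathbb{E}\log R(h,\xi_n^h)$. After that it takes a different route.

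The paper follows Lemma 3.4 of \cite{1}. Using $\int_0^\infty x^2p(x)\,dx=\tfrac12$, it splits $\int_0^{A_h}\log R(h,x)p(x)\,dx+\int_{A_h}^\infty\log R(h,A_h)p(x)\,dx$ exactly into three pieces: $\frac{R(h,1)-1}{2R(h,A_h)}$, the term $\frac{1-R(h,A_h)+R(h,A_h)\log R(h,A_h)}{2R(h,A_h)}$, and a correction $J(h,A_h)$. Let $F_n:=\beta N-\mu-\gamma-\beta I_n$. Multiplying the first piece above and below by $[1-\theta hF_n]^2$ gives the stated main term verbatim. So the denominator $R(h,A_h)$ arises structurally, at the price of three separate remainder estimates.

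You instead apply the mean value theorem (or second-order Taylor) to $y\mapsto\log(a_n^2-y)$ at $y=b_n^2x^2$. This needs only $\mathbb{E}(\xi_n^h)^2$, plus $\mathbb{E}(\xi_n^h)^4\le 3$ for the Taylor variant. It is shorter and makes the error sources visible. Two tightenings are worth making:
\begin{enumerate}
\item State the sandwich $R(h,A_h)\le R(h,\eta)\le a_n^2$, whose two ends are both $1+O(h|\ln h|)$. Bounding $R(h,\eta)$ by $R(h,A_h)$ alone gives only one inequality.
\item Record the error in (ii) as $O(h^2)$ rather than $o(h)$. This holds because $0\le 1-\mathbb{E}(\xi_n^h)^2=2\int_{A_h}^\infty(x^2-A_h^2)p(x)\,dx\le p(A_h)\big(4/A_h+4/A_h^3\big)$ and $p(A_h)=h/\sqrt{2\pi}$.
\end{enumerate}

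With these, the Taylor variant gives a genuine expansion. Set $B_n:=\vartheta(N-I_{n+1}^*)+(1-\vartheta)(N-I_n)$; then $\mathbb{E}(\log R_n|\mathcal{F}_{t_n})=\big(F_n-\tfrac12\sigma^2B_n^2\big)h+O(h^2)$. This also shows that the logarithmic loss you anticipate comes from the stated form itself, not from your bookkeeping. Since $1/R(h,A_h)=1+2\sigma^2B_n^2h|\ln h|+O(h)$, the stated main term differs from this expansion by $(2F_n-\sigma^2B_n^2)\sigma^2B_n^2h^2|\ln h|+O(h^2)$. The lemma's remainder is therefore really $O(h^2|\ln h|)$.

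The paper's own Parts II and III(i) are individually even larger, of order $h^2\ln^2h$. This is because $R(h,A_h)-1=O(h|\ln h|)$ and $\int_0^{A_h}(A_h^2-x^2)^2p(x)\,dx\sim A_h^4/2$. The loss is harmless: Theorems \ref{thm4}--\ref{thm6} only use an $o(h)$ remainder, uniform in $I_n\in(0,N)$.

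Your sign correction is also right. The paper's computation of $Z_1$ starts from the bracket with $+$. The expansion in the proof of Theorem \ref{thm4}, with cross term $-2\sigma^2\vartheta(1-\vartheta)N^2$, uses $+$ as well.
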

	\begin{proof}
		Let
		$$
		\begin{aligned}
			Q=&\int_{-\infty}^{+\infty}\log\big(\frac{1+(1-\theta)h(\beta N-\mu-\gamma-\beta I_n)}{1-\theta h(\beta N-\mu-\gamma-\beta I_n)}+\sqrt{h}\sigma\vartheta(N-I_{n+1}^*)x+\sqrt{h}\sigma(1-\vartheta)(N-I_n)x\big)p(x)dx,
		\end{aligned}
		$$
		where $x=\xi_n^h$. Substituting \( x \) with \( -x \) yields
		$$
		\begin{aligned}
			Q=&\int_{+\infty}^{-\infty}\log\big(\frac{1+(1-\theta)h(\beta N-\mu-\gamma-\beta I_n)}{1-\theta h(\beta N-\mu-\gamma-\beta I_n)}-\sqrt{h}\sigma\vartheta(N-I_{n+1}^*)x -\sqrt{h}\sigma(1-\vartheta)(N-I_n)x\big)p(-x)d(-x)\\
			=&\int_{-\infty}^{+\infty}\log\big(\frac{1+(1-\theta)h(\beta N-\mu-\gamma-\beta I_n)}{1-\theta h(\beta N-\mu-\gamma-\beta I_n)}-\sqrt{h}\sigma\vartheta(N-I_{n+1}^*)x -\sqrt{h}\sigma(1-\vartheta)(N-I_n)x\big)p(x)dx.
		\end{aligned}
		$$	
		Adding the two equations above yields
		$$
		\begin{aligned}
			2Q=&\int_{-\infty}^{+\infty}\big[\log\big(\frac{1+(1-\theta)h(\beta N-\mu-\gamma-\beta I_n)}{1-\theta h(\beta N-\mu-\gamma-\beta I_n)}+\sqrt{h}\sigma\vartheta(N-I_{n+1}^*)x\\
			&+\sqrt{h}\sigma(1-\vartheta)(N-I_n)x\big)\log\big(\frac{1+(1-\theta)h(\beta N-\mu-\gamma-\beta I_n)}{1-\theta h(\beta N-\mu-\gamma-\beta I_n)}\\
			&-\sqrt{h}\sigma\vartheta(N-I_{n+1}^*)x-\sqrt{h}\sigma(1-\vartheta)(N-I_n)x\big)\big]p(x)dx\\
			=&\int_{-\infty}^{+\infty}\log\big[\big(\frac{1+(1-\theta)h(\beta N-\mu-\gamma-\beta I_n)}{1-\theta h(\beta N-\mu-\gamma-\beta I_n)}\big)^2\\
			&-\big(\sqrt{h}\sigma\vartheta(N-I_{n+1}^*)x+\sqrt{h}\sigma(1-\vartheta)(N-I_n)x\big)^2\big]p(x)dx.
		\end{aligned}
		$$
		It is evident that the exponential part is an even function, thus we have:
		$$
		\begin{aligned}
			\mathbb{E}\big(\log{R_n}|\mathcal{F}_{t_n}\big)=
			&\frac{1}{2}\int_{-\infty}^{+\infty}\log\big[\big(\frac{1+(1-\theta)h(\beta N-\mu-\gamma-\beta I_n)}{1-\theta h(\beta N-\mu-\gamma-\beta I_n)}\big)^2\\
			&-\big(\sqrt{h}\sigma\vartheta(N-I_{n+1}^*)x+\sqrt{h}\sigma(1-\vartheta)(N-I_n)x\big)^2\big]p(x)dx\\
			=&\int_{0}^{A_h}\log\big[\big(\frac{1+(1-\theta)h(\beta N-\mu-\gamma-\beta I_n)}{1-\theta h(\beta N-\mu-\gamma-\beta I_n)}\big)^2\\
			&-\big(\sqrt{h}\sigma\vartheta(N-I_{n+1}^*)x+\sqrt{h}\sigma(1-\vartheta)(N-I_n)x\big)^2\big]p(x)dx.\\
			&+\int_{A_h}^{+\infty}\log\big[\big(\frac{1+(1-\theta)h(\beta N-\mu-\gamma-\beta I_n)}{1-\theta h(\beta N-\mu-\gamma-\beta I_n)}\big)^2\\
			&-\big(\sqrt{h}\sigma\vartheta(N-I_{n+1}^*)A_h+\sqrt{h}\sigma(1-\vartheta)(N-I_n)A_h\big)^2\big]p(x)dx.
		\end{aligned}
		$$
		From the definition of \( R(h,x) \) and Lemma 3.4 in \cite{1}, it follows that
		$$
		\begin{aligned}
			\mathbb{E}\big(\log{R_n}|\mathcal{F}_{t_n}\big)
			=&\int_{0}^{A_h}\log R(h,x)p(x)dx+\int_{A_h}^{+\infty}\log R(h,A_h)p(x)dx\\
			=&\int_{0}^{A_h}\log \big(\frac{R(h,x)}{R(h,A_h)}\big)p(x)dx+\int_{0}^{+\infty}\log R(h,A_h)p(x)dx\\
			=&\frac{R(h,1)-1}{2R(h,A_h)}+\frac{1-R(h,A_h)+R(h,A_h)\log(R(h,A_h))}{2R(h,A_h)} +J(h,A_h),
		\end{aligned}
		$$
		where 
		$$
		\begin{aligned}
			J(h,A_h)=&\int_{0}^{A_h}\big[\log \big(\frac{R(h,x)}{R(h,A_h)}\big)-\big(\frac{R(h,x)-R(h,A_h)}{R(h,A_h)}\big)\big]p(x)dx -\int_{A_h}^{+\infty}\log \big(\frac{R(h,x)-R(h,A_h)}{R(h,A_h)}\big)p(x)dx.
		\end{aligned}
		$$
		Next, we examine the values of the three parts above separately.\par 
		$\mathbf{Part\,I:}$ Let
		$$
		\begin{aligned}
			\frac{R(h,1)-1}{2R(h,A_h)}=\frac{(R(h,1)-1)\big[1-\theta h(\beta N-\mu-\gamma-\beta I_n)\big]^2}{2R(h,A_h)\big[1-\theta h(\beta N-\mu-\gamma-\beta I_n)\big]^2}=\frac{Z_1}{M_1}.
		\end{aligned}
		$$
		The calculation yields
		$$
		\begin{aligned}
			Z_1=&\big[1+(1-\theta)h(\beta N-\mu-\gamma-\beta I_n)\big]^2-\big[\sqrt{h}\sigma\vartheta(N-I_{n+1}^*)\\
			&+\sqrt{h}\sigma(1-\vartheta)(N-I_n)\big]^2\big[1-\theta h(\beta N-\mu-\gamma-\beta I_n)\big]^2 -\big[1-\theta h(\beta N-\mu-\gamma-\beta I_n)\big]^2\\
			=&\{2(\beta N-\mu-\gamma-\beta I_n)-\sigma^2\big[\vartheta(N-I_{n+1}^*)-(1-\vartheta)(N-I_n)\big]^2\}h +O(h^2),
		\end{aligned}
		$$
		and
		$$
		\begin{aligned}
			M_1=&2R(h,A_h)\big[1-\theta h(\beta N-\mu-\gamma-\beta I_n)\big]^2\\
			=&2\big[1+(1-\theta)h(\beta N-\mu-\gamma-\beta I_n)\big]^2-2\big[\sqrt{h}\sigma\vartheta(N-I_{n+1}^*)A_h\\
			&+\sqrt{h}\sigma(1-\vartheta)(N-I_n)A_h\big]^2\big[1-\theta h(\beta N-\mu-\gamma-\beta I_n)\big]^2.
		\end{aligned}
		$$
		Since \( M_1 \) contains the constant term 2, it follows that
		$$
		\begin{aligned}
			\frac{R(h,1)-1}{2R(h,A_h)}=&\frac{2(\beta N-\mu-\gamma-\beta I_n)-\sigma^2\big[\vartheta(N-I_{n+1}^*)-(1-\vartheta)(N-I_n)\big]^2}{2R(h,A_h)[1-\theta h(\beta N-\mu-\gamma-\beta I_n)]^2}h +O(h^2).  
		\end{aligned}
		$$\par 
		$\mathbf{Part\,II:}$ Considering the inequality \( y-4y^2<\ln(1+y)<y \) holds when \( |y|<\frac{1}{2} \) is satisfied. Then, we have
		$$
		\begin{aligned}
			&\frac{1-R(h,A_h)+R(h,A_h)\log(R(h,A_h))}{2R(h,A_h)}\\
			<&\frac{1-R(h,A_h)+R(h,A_h)(R(h,A_h)-1)}{2R(h,A_h)}\\
			=&\frac{(R(h,A_h)-1)^2}{2R(h,A_h)}
			=\frac{(R(h,A_h)-1)^2[1-\theta h(\beta N-\mu-\gamma-\beta I_n)]^4}{2R(h,A_h)[1-\theta h(\beta N-\mu-\gamma-\beta I_n)]^4}:=\frac{Z_2}{M_2},
		\end{aligned}
		$$
		here
		$$
		\begin{aligned}
			Z_2=&\{\big[1+(1-\theta)h(\beta N-\mu-\gamma-\beta I_n)\big]^2-\big[\sqrt{h}\sigma\vartheta(N-I_{n+1}^*)\\
			&+\sqrt{h}\sigma(1-\vartheta)(N-I_n)\big]^2\big[1-\theta h(\beta N-\mu-\gamma-\beta I_n)\big]^2 -\big[1-\theta h(\beta N-\mu-\gamma-\beta I_n)\big]^2\}^2\\
			=&\{\big[1+(1-\theta)h(\beta N-\mu-\gamma-\beta I_n)\big]^2-\big[1-\theta h(\beta N-\mu-\gamma-\beta I_n)\big]^2\\
			&-\big[\sqrt{h}\sigma\vartheta(N-I_{n+1}^*)+\sqrt{h}\sigma(1-\vartheta)(N-I_n)\big]^2 \big[1-\theta h(\beta N-\mu-\gamma-\beta I_n)\big]^2\}^2
\end{aligned}
		$$
$$
		\begin{aligned}
			=&\{1+2(1-\theta)h(\beta N-\mu-\gamma-\beta I_n)+(1-\theta)^2h^2(\beta N-\mu-\gamma-\beta I_n)^2-1+2\theta h(\beta N-\mu-\gamma-\beta I_n)^2\\
			&-\theta^2h^2(\beta N-\mu-\gamma-\beta I_n)^4-h\sigma^2A_h^2\big[\vartheta^2(N-I_{n+1}^*)^2+(1-\vartheta)^2(N-I_{n})^2\\
			&+2\vartheta(1-\vartheta)(N-I_{n+1}^*)(N-I_{n})\big]\big[1-2\theta h(\beta N-\mu-\gamma-\beta I_n) +\theta^2 h^2(\beta N-\mu-\gamma-\beta I_n)^2\big]\}^2\\
			=&\{\big[2(1-\theta)(\beta N-\mu-\gamma-\beta I_n)+2\theta (\beta N-\mu-\gamma-\beta I_n)^2\big]h\\
			&+(1-\theta)^2h^2(\beta N-\mu-\gamma-\beta I_n)^2-\theta^2h^2(\beta N-\mu-\gamma-\beta I_n)^4 +O(h)\}^2\\
			=&O(h^2),
		\end{aligned}
		$$
		and
		$ 
			M_2=2R(h,A_h)[1-\theta h(\beta N-\mu-\gamma-\beta I_n)]^4.
		 $
		Similar to \( M_1 \), it is evident that \( M_2 \) also contains the constant term 2. Consequently, it follows that
		$$
		\begin{aligned}
			\frac{1-R(h,A_h)+R(h,A_h)\log(R(h,A_h))}{2R(h,A_h)}<O(h^2).
		\end{aligned}
		$$\par 
		$\mathbf{Part\,III:}$Consider separately the two integrals within \( J(h,A_h) \).
		\par 
		$\mathbf{(i)}$ First, we consider term 
		\begin{equation}\label{11}
			\int_{0}^{A_h}\big[\log \big(\frac{R(h,x)}{R(h,A_h)}\big)-\big(\frac{R(h,x)-R(h,A_h)}{R(h,A_h)}\big)\big]p(x)dx.
		\end{equation}
		\par 
		Since inequality \( y-\frac{y^2}{2}<\log(1+y)<y \) holds for \( y>0 \), it follows that \( y-\log(1+y)<\frac{y^2}{2} \) and \(\log(1+y)-y<0\). It is evident that \( R(h,x) \) is non-increasing,   therefore, \( \frac{R(h,x)-R(h,A_h)}{R(h,A_h)}>0 \) holds on \( [0,A_h] \). Then we have
		\begin{equation}\label{12}
			\log\big(\frac{R(h,x)}{R(h,A_h)}\big)-\frac{R(h,x)-R(h,A_h)}{R(h,A_h)}<\frac{R(h,x)-R(h,A_h)}{R(h,A_h)}-\log\big(\frac{R(h,x)}{R(h,A_h)}\big)<\frac{1}{2}\big(\frac{R(h,x)-R(h,A_h)}{R(h,A_h)}\big)^2.
		\end{equation}
		Let 
		$$
		\begin{aligned}
			\frac{R(h,x)-R(h,A_h)}{R(h,A_h)}=&\frac{(R(h,x)-R(h,A_h))\big[1-\theta h(\beta N-\mu-\gamma-\beta I_n)\big]^2}{R(h,A_h)\big[1-\theta h(\beta N-\mu-\gamma-\beta I_n)\big]^2} 
			= \frac{Z_3}{M_3}.
		\end{aligned}
		$$
		Denote \( k(x) = \sqrt{2}h\vartheta(N-I_{n+1}^2)x+\sqrt{2}h(1-\vartheta)(N-I_{n})x \). Calculation yields
		$$
		\begin{aligned}
			Z_3=&\big[1-\theta h(\beta N-\mu-\gamma-\beta I_n)\big]^2\big[k(A_h)^2-k(x)^2\big]\\
			=&\big[1-\theta h(\beta N-\mu-\gamma-\beta I_n)\big]^2 (A_h^2-x^2)\big[\sigma\vartheta(N-I_{n+1}^*)+\sigma(1-\vartheta)(N-I_{n})\big]^2h
		\end{aligned}
		$$
		and
		$M_3=\frac{1}{2}M_1. $
		From \( M_1 \), \( M_3 \) also contains the constant term 1. Substituting Eq. (\ref{12}) into Eq. (\ref{11}) yields
$$
		\begin{aligned}
			&\int_{0}^{A_h}\big[\log \big(\frac{R(h,x)}{R(h,A_h)}\big)-\big(\frac{R(h,x)-R(h,A_h)}{R(h,A_h)}\big)\big]p(x)dx\\
			<&\int_{0}^{A_h}\big[\big(\frac{R(h,x)-R(h,A_h)}{R(h,A_h)}\big)-\log \big(\frac{R(h,x)}{R(h,A_h)}\big)\big]p(x)dx\\
			<&\int_{0}^{A_h}\big[\frac{1}{2}\big(\frac{R(h,x)-R(h,A_h)}{R(h,A_h)}\big)^2\big]p(x)dx\\
			=&\frac{1}{2M_3^2}\int_{0}^{A_h}\{\big[\sigma\vartheta(N-I_{n+1}^*)+\sigma(1-\vartheta)(N-I_{n})\big]^2\\
			&\times\big[h-2\theta(\beta N-\mu-\gamma-\beta I_n)h^2+\theta^2(\beta N-\mu-\gamma-\beta I_n)^2h^3\big] (A_h^2-x^2)\}^2p(x)dx\\
			=&\frac{1}{2M_3^2}\big[\sigma\vartheta(N-I_{n+1}^*)+\sigma(1-\vartheta)(N-I_{n})\big]^4\\
			&\times\big[h-2\theta(\beta N-\mu-\gamma-\beta I_n)h^2+\theta^2(\beta N-\mu-\gamma-\beta I_n)^2h^3\big]^2 \int_{0}^{A_h}(A_h^2-x^2)^2p(x)dx\end{aligned}
$$$$
		\begin{aligned}
			=&\frac{1}{2M_3^2}\big[\sigma\vartheta(N-I_{n+1}^*)+\sigma(1-\vartheta)(N-I_{n})\big]^4O(h^2)\int_{0}^{A_h}(A_h^2-x^2)^2p(x)dx\\
			=&\frac{1}{2M_3^2}\big[\sigma\vartheta(N-I_{n+1}^*)+\sigma(1-\vartheta)(N-I_{n})\big]^4O(h^2)\big[\frac{1}{2}-\int_{A_h}^{+\infty}(A_h^2-x^2)^2p(x)dx\big]\\
			=&\frac{1}{2M_3^2}\big[\sigma\vartheta(N-I_{n+1}^*)+\sigma(1-\vartheta)(N-I_{n})\big]^4O(h^2)\\
			&\times\big[\frac{1}{2}-A_h^4\int_{A_h}^{+\infty}p(x)dx+2A_h^2\int_{A_h}^{+\infty}x^2p(x)dx-\int_{A_h}^{+\infty}x^4p(x)dx\big].
		\end{aligned}
$$
		From the conclusion of Lemma \ref{lem3}, it follows that
		$$
		\begin{aligned}
			&\int_{0}^{A_h}\big[\log \big(\frac{R(h,x)}{R(h,A_h)}\big)-\big(\frac{R(h,x)-R(h,A_h)}{R(h,A_h)}\big)\big]p(x)dx\\
			<&\frac{1}{2M_3^2}\big[\sigma\vartheta(N-I_{n+1}^*)+\sigma(1-\vartheta)(N-I_{n})\big]^4O(h^2) \big[\frac{1}{2}-A_h^4\int_{A_h}^{+\infty}p(x)dx+O(h^{1-\frac{1}{2}\varepsilon})\big]
			=O(h^2).
		\end{aligned}
		$$
		\par 
		$\mathbf{(ii)}$ We consider term 
		$
		\int_{A_h}^{+\infty}\log \big(\frac{R(h,x)-R(h,A_h)}{R(h,A_h)}\big)p(x)dx.
		$
		Direct calculation yields
		$$
		\begin{aligned}
			&\int_{A_h}^{+\infty}\log \big(\frac{R(h,x)-R(h,A_h)}{R(h,A_h)}\big)p(x)dx\\
			=&\frac{1}{M_3}\int_{0}^{A_h}\big[\sigma\vartheta(N-I_{n+1}^*)+\sigma(1-\vartheta)(N-I_{n})\big]^2\\
			&\times\big[h-2\theta(\beta N-\mu-\gamma-\beta I_n)h^2+\theta^2(\beta N-\mu-\gamma-\beta I_n)^2h^3\big] (A_h^2-x^2)p(x)dx\\
			=&\frac{1}{M_3}\big[\sigma\vartheta(N-I_{n+1}^*)+\sigma(1-\vartheta)(N-I_{n})\big]^2O(h) \int_{0}^{A_h}(A_h^2-x^2)p(x)dx
			<O(h^2).
		\end{aligned}
		$$
		From $\mathbf{(i)}$ and $\mathbf{(ii)}$, it follows that \( J(h, A_h)<O(h^2) \).
		 
		From the three parts above, we have
		$$
		\begin{aligned}
			\mathbb{E}\big(\log{R_n}|\mathcal{F}_{t_n}\big)=&\frac{2(\beta N-\mu-\gamma-\beta I_n)-\sigma^2\big[\vartheta(N-I_{n+1}^*)-(1-\vartheta)(N-I_n)\big]^2}{2R(h,A_h)[1-\theta h(\beta N-\mu-\gamma-\beta I_n)]^2}h +O(h^2).
		\end{aligned}
		$$
	\end{proof}
	Based on the results of the above Lemmas, we next consider the dynamic behavior of the numerical solution according to three different scenarios: (i)  the threshold \( R_0^S < 1 \) and the noise environment is small,  (ii)   a high-noise environment,  and (iii)   \( R_0^S > 1 \).

	\begin{theorem}\label{thm4}{\rm
		If $R_0^S<1$ and $\sigma^2<\frac{\beta}{2N}$, there exists $h_0$ s.t. 
		$
		\lim_{n\rightarrow\infty} \sup \frac{\log I_{n+1}}{(n+1)h}<0
		$
		for $0<h<h_0$, $i.e.$ $I_{n+1}\rightarrow0$.}
	\end{theorem}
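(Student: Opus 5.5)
The plan is to exploit the multiplicative structure $I_{n+1}=R_nI_n$, which gives
$$\log I_{n+1}=\log I_0+\sum_{k=0}^{n}\log R_k .$$
We split each $\log R_k$ into its conditional mean and a martingale difference:
$$\log R_k=\mathbb{E}(\log R_k|\mathcal{F}_{t_k})+\bigl(\log R_k-\mathbb{E}(\log R_k|\mathcal{F}_{t_k})\bigr).$$
Dividing by $(n+1)h$, we will show two things. First, the averaged conditional means are bounded above by a strictly negative constant $-\kappa$ for small $h$. Second, $\frac{1}{(n+1)h}M_{n+1}\to0$ a.s., where $M_{n+1}=\sum_{k\le n}(\log R_k-\mathbb{E}(\log R_k|\mathcal{F}_{t_k}))$. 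Theorem \ref{thm2} ensures $I_k\in(0,N)$, so all logarithms are well defined. The same argument also shows that $R_k$ lies in a compact subinterval of $(0,\infty)$: since $|\xi_k^h|\le A_h$ and $\sqrt{h}A_h\to0$, we get $|\log R_k|\le C\sqrt{h|\ln h|}$.

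For the martingale part, $\mathbb{E}(\Delta M_k^2)\le C h|\ln h|$ uniformly in $k$. Hence $\sum_k \mathbb{E}(\Delta M_k^2)/k^2<\infty$, and the strong law of large numbers for martingales (Kronecker's lemma with the martingale convergence theorem) gives $M_{n+1}/(n+1)\to0$ a.s., and therefore $M_{n+1}/((n+1)h)\to0$ for fixed $h$. This step is routine.

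The core is the drift bound. By Lemma \ref{lem4}, $\mathbb{E}(\log R_k|\mathcal{F}_{t_k})=\frac{h}{2}\,\Phi(I_k)+O(h^2)$ up to the factor $1/\{R(h,A_h)[1-\theta h(\cdot)]^2\}=1+o(1)$. Here $\Phi(I)$ is essentially $2(\beta N-\mu-\gamma-\beta I)-\sigma^2(N-I)^2$ after replacing $I_{k+1}^*$ by $I_k+O(h)$. We then need
$$\sup_{I\in[0,N]}\bigl[\beta N-\mu-\gamma-\beta I-\tfrac12\sigma^2(N-I)^2\bigr]<0.$$
Write $u=N-I\in[0,N]$, so the function becomes $F(u)=-(\mu+\gamma)+\beta u-\tfrac12\sigma^2u^2$. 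Its critical point is $u=\beta/\sigma^2$, and the condition $\sigma^2<\beta/(2N)$ gives $\beta/\sigma^2>2N>N$. So $F$ is increasing on $[0,N]$, and
$$\max F=F(N)=\beta N-\mu-\gamma-\tfrac12\sigma^2N^2=(\mu+\gamma)(R_0^S-1)<0$$
because $R_0^S<1$. Choosing $h_0$ so that the $O(h^2)$ remainder and the $o(1)$ factor corrections are dominated by $\tfrac{h}{2}(\mu+\gamma)(1-R_0^S)$ uniformly in $I_k\in(0,N)$ gives $\mathbb{E}(\log R_k|\mathcal{F}_{t_k})\le-\kappa h$ with $\kappa>0$. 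The uniformity holds because all coefficients are bounded on $[0,N]$. Summing, dividing by $(n+1)h$ and taking $\limsup$ yields $\limsup\frac{\log I_{n+1}}{(n+1)h}\le-\kappa<0$, and hence $I_{n+1}\to0$ a.s.

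I expect the main obstacle to be the drift step. The expansion in Lemma \ref{lem4} must be controlled uniformly in the state, and the noise term there is written with a combination of $N-I_{k+1}^*$ and $N-I_k$ whose sign convention must be reconciled with $\sigma^2(N-I_k)^2$. I would handle this by using $|I_{k+1}^*-I_k|\le Ch$ from the explicit formula for $I_{k+1}^*$ in the proof of Theorem \ref{thm2}. That bound shows $\vartheta(N-I_{k+1}^*)+(1-\vartheta)(N-I_k)=(N-I_k)+O(h)$, and the resulting discrepancy is absorbed into the $O(h^2)$ term.
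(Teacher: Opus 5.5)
Your proof is correct and follows the same route as the paper: you write $\log I_{n+1}=\log I_0+\sum_k\log R_k$, split off a martingale that the martingale strong law handles, and bound the conditional drift from Lemma \ref{lem4} by $f(I_k)h+O(h^2)$ with $f<0$ uniformly. Your version is tidier than the paper's at a few points: you center by the conditional expectation $\mathbb{E}(\log R_k|\mathcal{F}_{t_k})$ (the paper writes the unconditional $\mathbb{E}(\log R_n)$), you read the noise combination in Lemma \ref{lem4} with the plus sign, $\vartheta(N-I_{k+1}^*)+(1-\vartheta)(N-I_k)$, as in $R(h,x)$ and in the paper's own expansion of $f(I_n)$, and your monotone-quadratic argument in $u=N-I$ replaces the paper's bracket-by-bracket sign check and in fact uses only $\sigma^2\le\beta/N$.
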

		\begin{proof}
			From the definition of \( R_n \), we have
			$
			\begin{aligned}
				\log I_{n+1}=\log R_n+\log I_n
				=\log R_0+\sum_{i=0}^{n}\log R_i.
			\end{aligned}
			$
			Denote \( M_n=\log R_n-\mathbb{E}(\log R_n) \) is a random variable with a mean of zero. And let $S_n=\sum_{i=0}^{n}M_i  $ is a martingale. Thus,
			$$
			\begin{aligned}
				\log I_{n+1}=&\log R_0+\sum_{i=0}^{n}\big(M_i+\mathbb{E}(\log R_i)\big)
				=\log R_0+S_n+\sum_{i=0}^{n} \mathbb{E}\big(\log R_i\big)\\
				=&\log R_0+S_n+\sum_{i=0}^{n} \mathbb{E}\big(\mathbb{E}(\log R_i\mid \mathcal{F}_{t_i})\big).
			\end{aligned}
			$$
			From Lemma \ref{lem4}, it follows that
			$$
			\begin{aligned}
				\mathbb{E}\big(\log R_i\mid \mathcal{F}_{t_i}\big)=&\frac{2(\beta N-\mu-\gamma-\beta I_n)-\sigma^2\big[\vartheta(N-I_{n+1}^*)-(1-\vartheta)(N-I_n)\big]^2}{2R(h,A_h)[1-\theta h(\beta N-\mu-\gamma-\beta I_n)]^2}h +O(h^2)
				:=f(I_n)h+O(h^2).
			\end{aligned}
			$$
			If $R_0^S<1$ and $\sigma^2<\frac{\beta}{2N}$, direct calculation yields
			$$
			\begin{aligned}
				f(I_n)=&\frac{1}{M_1}\big[2(\beta N-\mu-\gamma)-\sigma^2\vartheta^2N^2-2\sigma^2\vartheta(1-\vartheta)N^2-\sigma^2(1-\vartheta)^2N^2\big]\\
				&+\frac{1}{M_1}\big[2\sigma^2(\vartheta NI_{n+1}^*+(1-\vartheta)NI_n)-2\beta I_n\big]\\
				&+\frac{1}{M_1}\big[-\sigma^2\vartheta^2I_{n+1}^*-2\sigma^2\vartheta(1-\vartheta)I_nI_{n+1}^*-\sigma^2(1-\vartheta)^2I_n^2\big]
				<0.
			\end{aligned}
			$$
			By the large law of martingales in \cite{36}, $\lim_{n\rightarrow\infty}\frac{S_n}{n+1}=0$, Therefore,
			$$
			\begin{aligned}
				\lim_{n\rightarrow\infty} \sup \frac{\log I_{n+1}}{(n+1)h}=\lim_{n\rightarrow\infty} \sup \frac{\log I_0+S_n+\sum_{i=0}^{n}\big(f(I_i)h+O(h^2)\big)}{(n+1)h}<0.
			\end{aligned}
			$$
		\end{proof}

	\begin{theorem}\label{thm5}{\rm
		If $\sigma^2>\max\{\frac{\beta}{N}, \frac{\beta^2}{2(\mu+\gamma)},\frac{\beta^2}{8(\mu+\gamma)-4\beta N}\}$, there  exists  $h_0$  s.t.  
		$
		\lim_{n\rightarrow\infty} \sup \frac{\log I_{n+1}}{(n+1)h}<0
		$
		for $0<h<h_0$, $i.e.$ $I_{n+1}\rightarrow0$.}
\end{theorem}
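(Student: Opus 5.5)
The plan is to follow the structure of the proof of Theorem \ref{thm4}. Writing $\log I_{n+1}=\log I_0+\sum_{i=0}^n\log R_i$ and splitting each $\log R_i$ into its conditional mean plus a martingale difference, we get $\log I_{n+1}=\log I_0+S_n+\sum_{i=0}^n \mathbb{E}(\log R_i\mid\mathcal{F}_{t_i})$. The martingale differences are bounded: $R_i$ is bounded away from $0$ and from above for small $h$, by Theorem \ref{thm2} and the truncation $|\xi_n^h|\le A_h$. The strong law for martingales from \cite{36} then gives $S_n/(n+1)\to 0$ a.s. It therefore suffices to show that, uniformly in $I_n\in(0,N)$, the leading coefficient $f(I_n)$ in Lemma \ref{lem4} satisfies $f(I_n)\le -\lambda<0$. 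Once this holds, the $O(h^2)$ remainder is absorbed for $h<h_0$, and $\limsup \log I_{n+1}/((n+1)h)\le -\lambda/2<0$.

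For the sign of $f$, note that the denominator $2R(h,A_h)[1-\theta h(\cdots)]^2$ tends to $2$, since $hA_h^2=2h|\ln h|\to0$. So only the numerator $2(\beta N-\mu-\gamma-\beta I_n)-\sigma^2 G_n^2$ matters, where $G_n$ is the diffusion combination $\vartheta(N-I_{n+1}^*)+(1-\vartheta)(N-I_n)$. Because $I_{n+1}^*=I_n+O(h)$ uniformly on $(0,N)$, we have $G_n=N-I_n+O(h)$. Up to $O(h)$, the numerator is therefore the quadratic
\[
q(x)=2(\beta N-\mu-\gamma)-2\beta x-\sigma^2(N-x)^2,\qquad x\in[0,N].
\]
This is the same function that governs the analytic case in \cite{32}. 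We then maximize $q$ on $[0,N]$. Its derivative is $q'(x)=-2\beta+2\sigma^2(N-x)$, which vanishes at $x^\ast=N-\beta/\sigma^2$. The condition $\sigma^2>\beta/N$ puts $x^\ast$ in $(0,N)$. At $x^\ast$,
\[
\max_{[0,N]} q = q(x^\ast)=\frac{\beta^2}{\sigma^2}-2(\mu+\gamma).
\]
The condition $\sigma^2>\beta^2/(2(\mu+\gamma))$ makes this maximum strictly negative. Call it $-2\lambda_0$.

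The third condition, $\sigma^2>\beta^2/(8(\mu+\gamma)-4\beta N)$, is what I would use to control the discrepancy between $q(I_n)$ and the true numerator. That discrepancy is of the form $\sigma^2[(N-I_n)^2-G_n^2]$ plus $h$-terms. I would bound $|G_n^2-(N-I_n)^2|\le 2N|I_{n+1}^*-I_n|\le C h$, using the explicit formula for $I_{n+1}^*$ from the proof of Theorem \ref{thm2}. Then, for $h$ small, the numerator is at most $-\lambda_0$. If this crude bound is insufficient, the fallback is to complete the square in a version of $q$ with a slack factor, say $2\beta x$ against $\tfrac12\sigma^2(N-x)^2$. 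That produces exactly the constant $8(\mu+\gamma)-4\beta N$ when $8(\mu+\gamma)>4\beta N$, and the condition then gives a uniform negative bound. If $8(\mu+\gamma)\le 4\beta N$, the third condition is vacuous or meaningless, and the first two suffice.

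The main obstacle I expect is making the $O(h)$ and $O(h^2)$ errors from Lemma \ref{lem4} uniform in $I_n$ and in $\omega$. This matters because $f(I_n)<0$ must hold with a margin independent of $n$ for the averaged sum to stay strictly negative. I would handle it by noting that all quantities involved ($I_n$, $I_{n+1}^*$, $\beta N-\mu-\gamma-\beta I_n$) lie in compact ranges by Theorem \ref{thm2}. The constants hidden in the $O(\cdot)$ terms can then be chosen depending only on $\beta,\mu,\gamma,\sigma,N$. Choosing $h_0$ so that these errors are below $\lambda_0/2$ completes the argument.
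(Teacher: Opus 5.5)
Your argument is correct, but it handles the key sign estimate by a different route from the paper.

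\textbf{The paper's route.} The paper splits into two cases, $I_n>I_{n+1}^*$ and $I_n<I_{n+1}^*$. In each case it uses the monotonicity of the diffusion factor $G_n$ in $I_{n+1}^*$ to bound $-\sigma^2G_n^2$ exactly:
\begin{itemize}
\item In the first case the bound is $-\sigma^2(N-I_n)^2$, and the paper maximizes the same quadratic $q$ you use.
\item In the second case the bound is $-\sigma^2(N-I_{n+1}^*)^2$. The paper then uses $I_{n+1}^*/I_n<2$ for small $h$ to replace $-2\beta I_n$ by $-\beta I_{n+1}^*$, and maximizes $g(y)=2(\beta N-\mu-\gamma)-\beta y-\sigma^2(N-y)^2$. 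The true maximum is $\beta N-2(\mu+\gamma)+\beta^2/(4\sigma^2)$; the paper's display drops the last term. Requiring this maximum to be negative is exactly the third hypothesis.
\end{itemize}

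\textbf{What your route buys.} Write $a_n=\beta N-\mu-\gamma-\beta I_n$. Your uniform perturbation bound $|G_n^2-(N-I_n)^2|\le Ch$ follows from $I_{n+1}^*-I_n=\frac{ha_n}{1-\theta ha_n}I_n$. It removes both the case split and the lossy factor $2$.
\begin{itemize}
\item You only need $\beta^2/\sigma^2<2(\mu+\gamma)$. The first hypothesis merely places the vertex in $(0,N)$, since the unconstrained maximum of $q$ is already $\beta^2/\sigma^2-2(\mu+\gamma)$.
\item You therefore prove a stronger statement without the third hypothesis.
\item You also avoid the paper's implicit requirement $8(\mu+\gamma)>4\beta N$. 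Without it, the maximum of $g$ is positive and case (ii) of the paper's proof fails.
\item The price is that the $O(h)$ discrepancy, and the $O(h^2)$ remainder from Lemma~\ref{lem4}, must be controlled uniformly in $I_n$. You do this by compactness.
\item Your martingale decomposition subtracts conditional means $\mathbb{E}(\log R_i\mid\mathcal{F}_{t_i})$. That is the version for which $S_n$ really is a martingale with bounded increments.
\end{itemize}

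\textbf{One correction to your fallback.} Halving $\sigma^2$ (keeping $2\beta x$ against $\tfrac12\sigma^2(N-x)^2$) yields the threshold $\sigma^2>\beta^2/(\mu+\gamma)$, not $\beta^2/(8(\mu+\gamma)-4\beta N)$. The latter constant comes from halving the linear term $2\beta x$, as in the paper. Your main line never uses the fallback, so this does not affect the proof.
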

		\begin{proof}  Denote
			$$
			f(I_n)=\frac{1}{M_1}\{2(\beta N-\mu-\gamma-\beta I_n)-\sigma^2\big[\vartheta(N-I_{n+1}^*)-(1-\vartheta)(N-I_n)\big]^2\}.
			$$
			$\mathbf{(i)}$ If $I_n>I_{n+1}^*$, then
			$$
			\begin{aligned}
				f(I_n)<&2(\beta N-\mu-\gamma-\beta I_n)-\sigma^2(N-I_n)^2\\
				=&-\sigma^2I_n^2+(2N\sigma^2-2\beta)I_n+(2\beta N-\sigma^2N^2-2\mu-2\gamma).
			\end{aligned}
			$$
			When \( I_n=\frac{N\sigma^2-\beta}{\sigma^2}>0 \) $i.e$. $\sigma^2>\frac{\beta}{N}$, it holds that
			$$
			\begin{aligned}
				\max f(I_n)=\frac{4\sigma^2(2\beta N-\sigma^2N^2-2\mu-2\gamma)+(2N\sigma^2-2\beta)^2}{4\sigma^2}
				=\frac{\beta^2}{\sigma^2}-2\mu-2\gamma 
				< 0.
			\end{aligned}
			$$
			$\mathbf{(ii)}$ If $I_n<I_{n+1}^*$, we have $R_n<2$ when $0<h<\frac{1}{2(\beta N+\mu+\gamma)}$. Hence,
			$$
			\begin{aligned}
				f(I_n)<&2(\beta N-\mu-r-\beta \frac{1}{R_n}I_{n+1}^*)-\sigma^2(N-I_{n+1}^*)^2\\
				<&2(\beta N-\mu-\gamma- \frac{1}{2}\beta I_{n+1}^*)-\sigma^2(N-I_{n+1}^*)^2\\
				=&-\sigma^2{I_{n+1}^*}^2+(2N\sigma^2-\beta)I_{n+1}^*+(2\beta N-\sigma^2N^2-2\mu-2\gamma)\\
				:=&g(I_{n+1}^*).
			\end{aligned}
			$$
			When \( I_{n+1}^*=\frac{2\sigma^2N-\beta}{2\sigma^2}>0 \) $i.e.$ $\sigma^2>\frac{\beta}{2N}$  holds, we have
			$$
			\begin{aligned}
				\max g(I_{n+1}^*)=\frac{4\sigma^2(2\beta N-\sigma^2N^2-2\mu-2\gamma)+(2N\sigma^2-\beta)^2}{4\sigma^2}
				=\beta N-2\mu-2\gamma 
				< 0.
			\end{aligned}
			$$
			\par 
			From $\mathbf{(i)}$ and $\mathbf{(ii)}$, it follows that $f(I_n)<0$; similar to the proof of Theorem \ref{thm4}, we obtain $I_{n+1}\rightarrow0$.
		\end{proof}
	
	\begin{theorem}\label{thm6}{\rm
		If $R_0^S>1$ and $\frac{3-\sqrt{5}}{4}\leq\vartheta<\frac{1}{2}$, there exists $h_0>0$ s.t. 
		$
		\lim_{n\rightarrow\infty} \inf I_{n+1}\leq\xi^+\leq\lim_{n\rightarrow\infty} \sup I_{n+1}$ for $0<h<h_0.
		$}
\end{theorem}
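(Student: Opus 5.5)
The plan is to argue by contradiction, in the same way the persistence result for the semi-implicit scheme is proved in \cite{1} and the analytic result in \cite{32}. Here $\xi^+$ is the positive root of the leading-order drift of $\mathbb{E}(\log R_n\mid\mathcal F_{t_n})$ at a fixed point. Since $I_{n+1}^*=I_n+O(h)$, Lemma \ref{lem4} gives
$$
\mathbb{E}(\log R_n\mid\mathcal F_{t_n})=\big[(\beta N-\mu-\gamma-\beta I_n)-\tfrac{\sigma^2}{2}(2\vartheta-1)^2(N-I_n)^2\big]h+O(h^2)=:F(I_n)h+O(h^2).
$$
I take $\xi^+\in(0,N)$ to be the zero of the quadratic $F$ with $F>0$ on $(0,\xi^+)$ and $F<0$ on $(\xi^+,N)$. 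The root lies in $(0,N)$ because $F(N)=-(\mu+\gamma)<0$ and
$$
F(0)=\beta N-\mu-\gamma-\tfrac{\sigma^2N^2}{2}(2\vartheta-1)^2 .
$$
The restriction $\frac{3-\sqrt5}{4}\le\vartheta<\frac12$ gives $(1-2\vartheta)^2\le 1$, so $R_0^S>1$ implies $F(0)\ge(\mu+\gamma)(R_0^S-1)>0$. The lower bound on $\vartheta$ (equivalently $4\vartheta^2-6\vartheta+1\le0$) is what I expect to need to make $F$ single-signed on each side of $\xi^+$. I would check this by examining the vertex of $F$, and use it again when comparing the $I_{n+1}^*$- and $I_n$-dependent pieces of $f(I_n)$ in Lemma \ref{lem4}.

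For the upper half, suppose $\limsup I_{n+1}<\xi^+$ on a set of positive probability. Then there is $\varepsilon>0$ and a random $n_0$ such that $I_n\le \xi^+-\varepsilon$ for all $n\ge n_0$. The first step is to show, using Lemma \ref{lem4}, Theorem \ref{thm2} (boundedness, so $R(h,A_h)=1+O(\sqrt{h|\ln h|})$) and the expansion $I^*_{n+1}=I_n+O(h)$, that the full coefficient $f(I_n)$ in Lemma \ref{lem4} satisfies $f(I_n)\ge c(\varepsilon)>0$ for $h<h_0$. Next I decompose $\log I_{n+1}=\log I_{n_0}+S_n+\sum_i \mathbb{E}(\log R_i\mid\mathcal F_{t_i})$ exactly as in Theorem \ref{thm4}. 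The martingale strong law of \cite{36} applies because the increments $\log R_i$ have bounded conditional second moments: $R_i$ is bounded away from $0$ for small $h$, by the positivity argument of Theorem \ref{thm2}. This gives
$$
\liminf_{n\to\infty}\frac{\log I_{n+1}}{(n+1)h}\ge c(\varepsilon)-O(h)>0 .
$$
That forces $I_n\to\infty$, contradicting $I_n<N$. The lower half is symmetric. If $\liminf I_{n+1}>\xi^+$, then eventually $I_n\ge\xi^++\varepsilon$, so $f(I_n)\le -c(\varepsilon)$. Hence $\log I_n\to-\infty$, i.e. $I_n\to0$, which contradicts $I_n\ge\xi^++\varepsilon$.

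I expect the main obstacle to be the uniform sign control of $f(I_n)$ away from $\xi^+$. The coefficient mixes $I_{n+1}^*$ and $I_n$, and it carries the denominator $2R(h,A_h)[1-\theta h(\cdot)]^2$. I would handle this by bounding $|I_{n+1}^*-I_n|\le Ch$ and writing $R(h,A_h)=1+O(h|\ln h|)$, so that $f(I_n)=F(I_n)+O(h|\ln h|)$ uniformly on $[0,N]$. The $O(h|\ln h|)$ term can then be absorbed into $c(\varepsilon)$ by shrinking $h_0$. Two secondary technical points remain. One is to replace $\mathbb{E}(\log R_i)$ by the conditional expectation in the decomposition, i.e. to work with $M_i=\log R_i-\mathbb{E}(\log R_i\mid\mathcal F_{t_i})$. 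The other is to localise the argument on the random time $n_0$.
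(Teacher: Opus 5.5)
There is a genuine gap where you absorb the $O(h|\ln h|)$ difference between $f$ and $F$ into $c(\varepsilon)$ ``by shrinking $h_0$''. The same applies to the accumulated remainder of Lemma~\ref{lem4}, which contributes $O(h)$ to $\log I_{n+1}/((n+1)h)$. This makes $h_0$ depend on $\varepsilon$, but the theorem fixes $h_0$ first. For a given $h<h_0$, negating the conclusion only gives you some $\varepsilon>0$, possibly much smaller than $h|\ln h|$. Since $c(\varepsilon)$ is of order $|F'(\xi^+)|\,\varepsilon$, the sign of the averaged drift is then undetermined. As written, you only prove $\liminf_n I_n\le\xi^++\delta(h)$ and $\limsup_n I_n\ge\xi^+-\delta(h)$ with $\delta(h)=O(h|\ln h|)$. (The paper does not resolve this either: it drops the accumulated $O(h^2)$ terms when passing to $\ge f(\xi^++\varepsilon)$.)

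The remedy is to let the threshold depend on $h$ and use the exact conditional drift. Write $I^*_{n+1}=r(I_n)I_n$, with $r$ the explicit ratio from the proof of Theorem~\ref{thm2}, and set $D(x)=\vartheta(N-xr(x))+(1-\vartheta)(N-x)$. Since $\xi_n$ is independent of $\mathcal F_{t_n}$, one has $\mathbb{E}(\log R_n\mid\mathcal F_{t_n})=h\,g_h(I_n)$ with $g_h(x)=h^{-1}\mathbb{E}\log\bigl(r(x)+\sqrt h\,\sigma D(x)\xi_n^h\bigr)$. This $g_h$ is deterministic and continuous, and carries no remainder. Because $R_n\in[\tfrac12,2]$ for small $h$, you can differentiate under the expectation. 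A Taylor expansion in $y=\sqrt h\,\sigma D\xi_n^h/r$, using $\mathbb{E}\xi_n^h=\mathbb{E}(\xi_n^h)^3=0$ and $\mathbb{E}(\xi_n^h)^2\to1$, then gives $g_h\to F_0$ in $C^1([0,N])$, where $F_0(x)=(\beta N-\mu-\gamma-\beta x)-\tfrac{\sigma^2}{2}(N-x)^2$.

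Note the limit has $(N-x)^2$, not $(2\vartheta-1)^2(N-x)^2$. The bracket in the statement of Lemma~\ref{lem4} should read $\vartheta(N-I^*_{n+1})+(1-\vartheta)(N-I_n)$; that is what its $Z_1$ computation produces and what the proof of Theorem~\ref{thm4} uses. $F_0$ is concave with $F_0(0)=(\mu+\gamma)(R_0^S-1)>0>F_0(N)$, so $F_0'<0$ at its root. Hence for $h<h_0$, $g_h$ has a unique zero $\xi^+(h)\in(0,N)$, positive to its left and negative to its right. By compactness, $\min_{[0,\xi^+(h)-\varepsilon]}g_h>0$ and $\max_{[\xi^+(h)+\varepsilon,N]}g_h<0$ for every $\varepsilon>0$. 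Your two contradictions then close with nothing left to absorb.

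Apart from this, your route is the paper's: contradiction on each side of $\xi^+$ via Lemma~\ref{lem4} and the martingale law of large numbers. Your version is the coherent one, though. The paper expands its $f$ as a quartic with $c_0=\sigma^2N^2(2\vartheta-1)^2-2(\beta N-\mu-\gamma)<0$ and concludes that $f$ increases. But at $x=0$ one has $I^*_{n+1}=0$, so the constant term of that same $f$ is $2(\beta N-\mu-\gamma)-\sigma^2N^2(2\vartheta-1)^2>0$. With the reversed signs, the paper's case (ii) ends with $I_n\to0$, which does not contradict $I_n\le\xi^+-\varepsilon$.

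Your pairing avoids this. Below $\xi^+$ you get blow-up, contradicting $I_n<N$; above $\xi^+$ you get $I_n\to0$, contradicting $I_n\ge\xi^++\varepsilon$. Likewise, using $\mathbb{E}(\cdot\mid\mathcal F_{t_i})$ in $M_i$, rather than the paper's unconditional mean, is what makes $S_n$ a martingale. Finally, concavity alone gives the single sign change for every $\vartheta\in[0,1]$, for your $F$ as well as for $F_0$. So you do not need $\vartheta\ge\frac{3-\sqrt5}{4}$; in the paper that restriction serves only the sign claims on the quartic's coefficients.
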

		\begin{proof}
			Let $A=\beta N-\mu-\gamma$. There exists \( h_0^1 > 0 \), when $0<h<h_0^1$ and from the definitions of \( f(x) \) and \( I_{n+1}^* \), it follows that
			$$
			\begin{aligned}
				f(x)=&\frac{1}{M_1}\{2(\beta N-\mu-\gamma-\beta x)-\sigma^2\big[\vartheta(N-I_{n+1}^*)-(1-\vartheta)(N-x)\big]^2\}\\
				=&\frac{1}{M_1}(c_0+c_1x+c_2x^2+c_3x^3+c_4x^4),
			\end{aligned}
			$$
			where
			$$
			\begin{aligned}
				M_1=&2R(h,A_h)\big[1-\theta h(\beta N-\mu-r-\beta I_n)\big]^2\\
				=&2\big[1+(1-\theta)h(\beta N-\mu-\gamma-\beta I_n)\big]^2-2\big[\sqrt{h}\sigma\vartheta(N-I_{n+1}^*)A_h\\
				&+\sqrt{h}\sigma(1-\vartheta)(N-I_n)A_h\big]^2\big[1-\theta h(\beta N-\mu-\gamma-\beta I_n)\big]^2>0,\\
				c_0=&\sigma^2N^2(2\vartheta-1)^2-2A+O(h)+O(h^2),\\
c_1=&2\beta+2\sigma^2\vartheta(1-\vartheta)N-2\sigma^2(1-\vartheta)^2N+2\sigma^2N[\vartheta(1-\vartheta)-\vartheta^2]+O(h)+O(h^2),\\
c_2=&\sigma^2[\vartheta-(1-\vartheta)]^2+O(h)+O(h^2)\geq0,\\
	c_3=&2\beta\sigma^2(1-\theta)[\vartheta(1-\vartheta)-\vartheta^2]h+2\beta\sigma^2\theta [(1-\vartheta)^2-\vartheta(1-\vartheta)]h+O(h^2)\geq0,\\
c_4=&\sigma^2h^2\beta^2[\vartheta(1-\theta)+(1-\vartheta)\theta]^2>0.				.
			\end{aligned}
			$$
			From the definition of \( R_0^S \), it follows that when \( R_0^S > 1 \), we have $2\beta>\sigma^2N$ and $2A>\sigma^2N$. Thus, we have
			$$
			\begin{aligned}
				c_0<&\sigma^2N^2[(2\vartheta-1)^2-1]+O(h)+O(h^2)
				<0,\\
c_1>&\sigma^2N(-4\vartheta^2+6\theta-1)+2\sigma^2N[\vartheta(1-\vartheta)-\vartheta^2]+O(h)+O(h^2)
				\geq0.				
			\end{aligned}
			$$
			In summary, $c_0<0, c_1\geq0, c_2\geq0,c_3\geq0, c_4>0$. It is obvious that
			$$
			f'(x)=\frac{1}{M_1}(c_1+2c_2x^2+3c_3x^3+4c_4x^4)>0,\,  \, x\in(0,N).
			$$
			Thus, \( f(x) \) is strictly increasing on \( (0,N) \). There exists $h_0^2>0$ such that for $0<h<h_0^2$, we have
			$$
			\begin{aligned}
				f(0)=c_0<0,\quad
				f(N)=\sigma^2[\vartheta(N-\frac{1+(1-\theta)(\mu+\gamma)h}{1+\theta h(\mu+\gamma)}N)]+2(\mu+\gamma)>0.
			\end{aligned}
			$$
			Obviously, \( f(x) \) has a unique positive root in the interval \( (0, N) \), denoted by $\xi^+$. In the following, we prove \( \lim_{n\rightarrow\infty} \inf I_{n+1}\leq\xi^+ \) and \( \lim_{n\rightarrow\infty} \sup I_{n+1}\geq\xi^+ \) separately.\par 
			$\mathbf{(i)}$ Suppose that \( \lim_{n\rightarrow\infty} \inf I_{n+1}\leq\xi^+ \) does not hold. Then $\mathbb{P}(\Omega_1)>\varepsilon$, where $\Omega_1=\{\lim_{n\rightarrow\infty} \inf I_{n+1}\geq\xi^++2\varepsilon\}$. There exists $m_1(\omega)>0$, when \( n\geq m_1 \) holds, we have \( I_n\geq \xi^++\varepsilon \). Hence,
			$$
			\begin{aligned}
				\lim_{n\rightarrow\infty}\frac{\log I_{n+1}}{(n+1)h}\geq& \lim_{n\rightarrow\infty}\frac{1}{(n+1)h}(\log I_0+S_n+\sum_{i=0}^{m_1}\mathbb{E}(\mathbb{E}(\log{R_n}|\mathcal{F}_{t_n})) +f(\xi^++\varepsilon)(n-m_1)h+O(h^2))\\
				\geq&f(\xi^++\varepsilon) 
				>  0.
			\end{aligned}
			$$
			Then, we have \( \log I_{n+1}\rightarrow+\infty\) i.e. \( I_{n+1}\rightarrow+\infty \), which is a contradiction.\par 
			$\mathbf{(ii)}$ Suppose that \( \lim_{n\rightarrow\infty} \sup I_{n+1}\geq\xi^+ \) does not hold. Then, there exists $\Omega_2=\{\lim_{n\rightarrow\infty} \sup I_{n+1}\leq\xi^+-2\varepsilon\}$ such that $\mathbb{P}(\Omega_2)>\varepsilon$. Similarly, there exists $m_2(\omega)>0$, we have \( I_n\leq \xi^+-\varepsilon \) when \( n\geq m_2 \). It follows that
			$$
			\begin{aligned}
				\lim_{n\rightarrow\infty}\frac{\log I_{n+1}}{(n+1)h}\leq& \lim_{n\rightarrow\infty}\frac{1}{(n+1)h}(\log I_0+S_n+\sum_{i=0}^{m_2}\mathbb{E}(\mathbb{E}(\log{R_n}|\mathcal{F}_{t_n})) +f(\xi^+-\varepsilon)(n-m_2)h+O(h^2))\\
				\leq&f(\xi^+-\varepsilon) 
				<  0.
			\end{aligned}
			$$
			Thus, we have \( \log I_{n+1}\rightarrow-\infty\) i.e. \( I_{n+1}\rightarrow0 \), which is a contradiction.\par 
			From $\mathbf{(i)}$ and $\mathbf{(ii)}$, it is clear that $\lim_{n\rightarrow\infty} \inf I_{n+1}\leq\xi^+\leq\lim_{n\rightarrow\infty} \sup I_{n+1}.$
		\end{proof}

\section{Numerical Simulation}\label{s4}

In this part, we conduct numerical simulations through a series of examples, providing visual representations of the theoretical results in figures. Example \ref{exam1} verifies that the Wiener truncation ensures the positivity of the numerical solution. Example \ref{exam2} compares the computation time and error between Eq. (\ref{ssbe2}) and Eq. (\ref{In4}). Finally, Examples \ref{exam3}-\ref{exam5} validate the extinction and persistence of the disease under three different conditions.

\begin{example}\label{exam1}(Effect of Wiener truncation on positivity preservation) {\rm 
We first examine the role of Wiener truncation in ensuring the positivity of the numerical solution, a key requirement for epidemic models.  The parameters are fixed as \( N=100, \mu=20, \gamma=25, \theta=0.5, \vartheta=0.5, I_0=50 \), and two different combinations of \(\sigma\) and \(h\) are tested. 

\begin{center}
\includegraphics[width=0.40\linewidth]{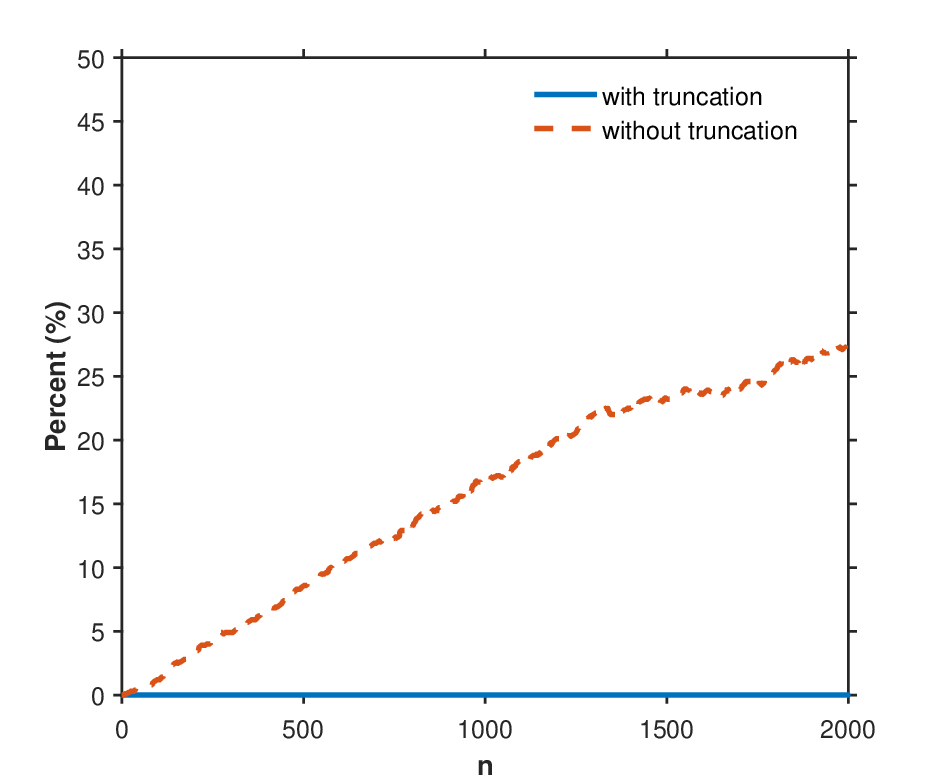} 
\includegraphics[width=0.40\linewidth]{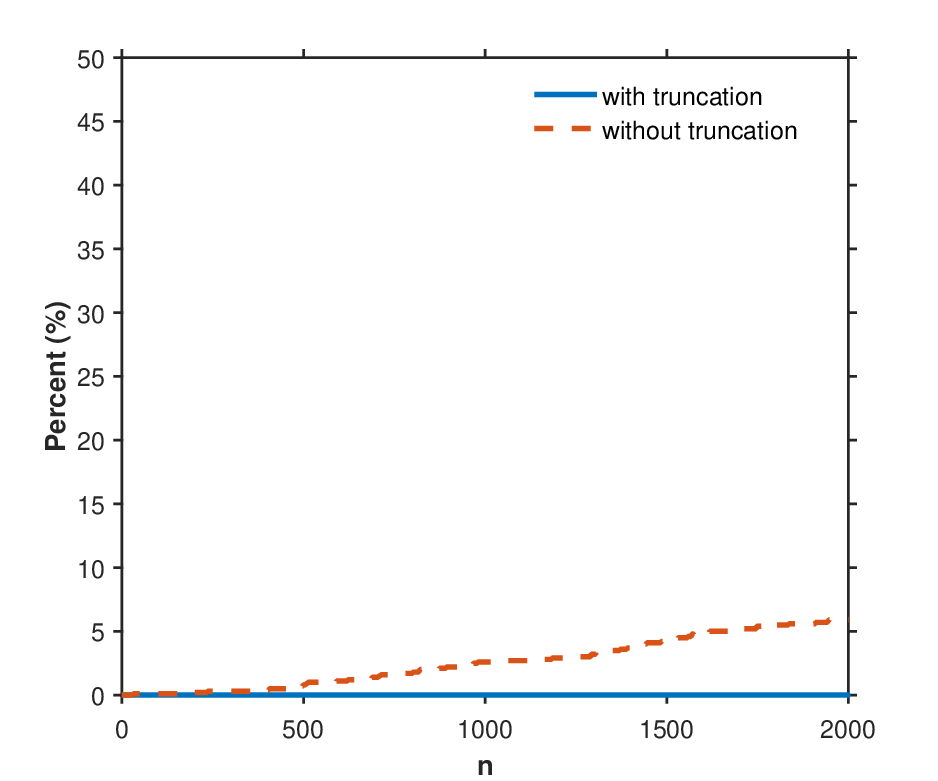}

{\rm Figure 1. The influence of the Wiener truncation on the model solution:  (a) $\sigma=0.03, h=0.01$ and (b) $\sigma=0.08, h=0.001$. }
\end{center}

Figure 1 compares the probability of obtaining non-physical negative values due to infection with and without Wiener truncation under two distinct parameter sets.In the absence of truncation, a non-negligible probability of negative solutions is observed, particularly when the noise intensity or step size is large. By contrast, the truncated version completely eliminates such unphysical outcomes. This demonstrates that the Wiener truncation not only preserves positivity but also enhances the robustness of the numerical scheme, especially in long-time simulations.}
\end{example}

\begin{example}\label{exam2} (Computational efficiency and accuracy of the linearized scheme) {\rm 
One of the main contributions of this work is the linearization of the fully implicit scheme, which reduces computational cost while maintaining accuracy. To quantify this advantage, we compare the nonlinear scheme Eq.(\ref{ssbe2}) and the linearized scheme Eq.(\ref{In4}) in terms of computation time and mean-square error. The parameter values are set as follows: \( N=1000, \beta=0.5/N, \mu=0.02, \gamma=0.1, \sigma=0.3, \theta=\vartheta=0.5, I_0=50 \). Five different step sizes, ranging from \(h = 0.1\) to \(h = 0.005\), are tested.

\begin{center}
\includegraphics[width=0.40\linewidth]{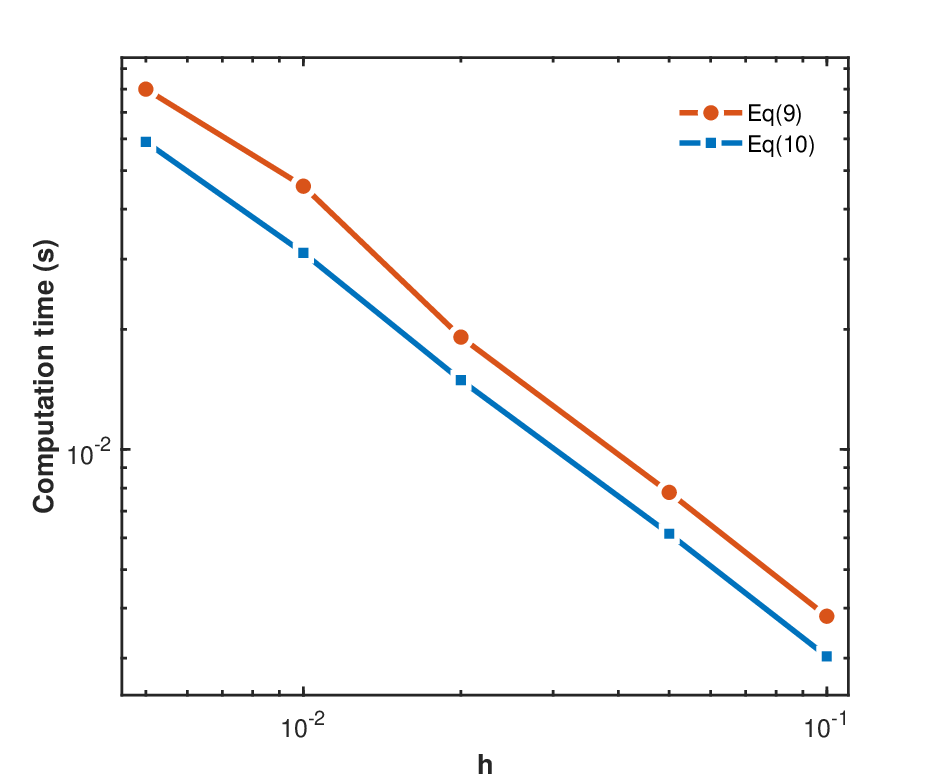}
\includegraphics[width=0.40\linewidth]{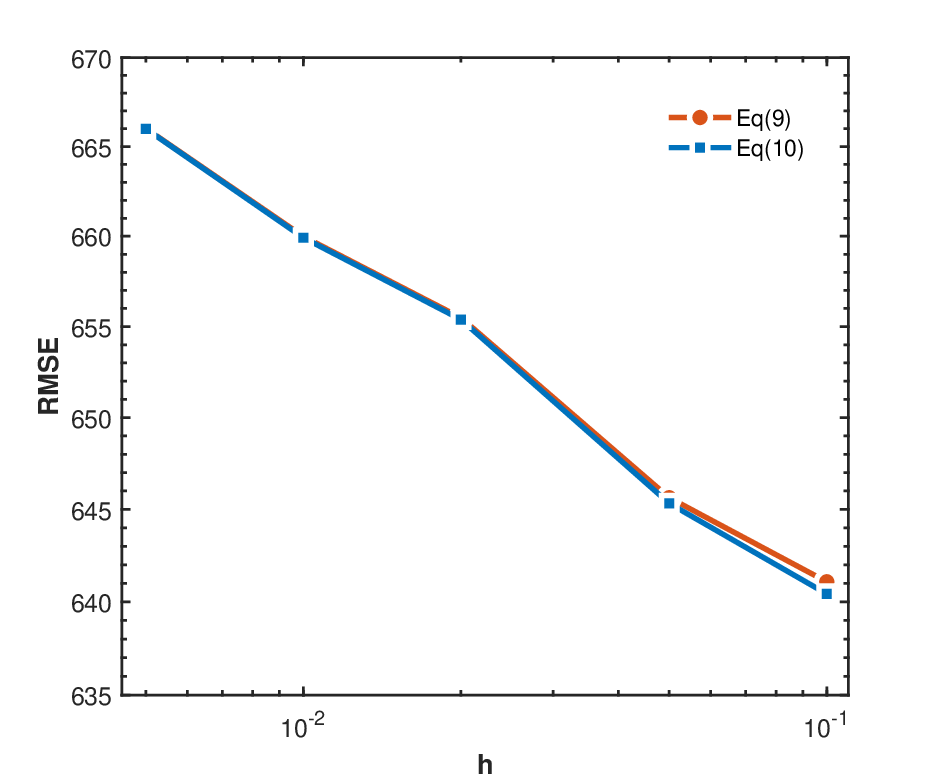}

{\rm Figure 2. Comparison of computation time and error between Eq. (\ref{ssbe2}) and Eq. (\ref{In4}) under different step sizes. }
\end{center}

\begin{table}[!H]
	\centering
	\caption{Comparison of computation time and error under different step sizes (Keep four decimal places).}
	\label{tab:step_comparison}
	\begin{tabular}{
			l
			S[table-format=1.2e-2]
			S[table-format=1.2e-2]
			S[table-format=3.4]
			S[table-format=3.4]
		}
		\toprule
		{Step size $h$} & 
		{\makecell{Computation time  for Eq. (\ref{ssbe2}) [s]}} & 
		{\makecell{Computation time  for Eq. (\ref{In4}) [s]}} & 
		{\makecell{Original   error}} & 
		{\makecell{Linear   error}} \\
		\midrule
		$h=0.100$ & 0.0038 & 0.0030 & 641.0831 & 640.4289 \\
		$h=0.050$ & 0.0078 & 0.0061 & 645.6351 & 645.3278 \\
		$h=0.020$ & 0.0191 & 0.0149 & 655.4887 & 655.3884 \\
		$h=0.010$ & 0.0457 & 0.0311 & 659.9866 & 659.9178\\
		$h=0.005$ & 0.0800 & 0.0590 & 666.0159 & 665.9950 \\
		\bottomrule
	\end{tabular}
\end{table}

 Figure 2 and Table \ref{tab:step_comparison} summarize the results: the linearized scheme is substantially faster while maintaining comparable accuracy. For all step sizes, the linearized scheme Eq.(\ref{In4}) reduces computation time by approximately 25\% compared to the nonlinear scheme Eq.(\ref{ssbe2}). Meanwhile, the mean-square error of the linearized scheme remains very close to that of the original nonlinear scheme, with the ratio between them close to 1. This indicates that the linearization introduces only a negligible loss of accuracy while significantly improving computational efficiency, making scheme Eq.(\ref{In4}) preferable for large-scale or long-term simulations.}

\end{example}

\begin{example}\label{exam3}(Disease extinction under low noise and \(R_0^S < 1\)) {\rm 
 Theorem \ref{thm4} shows that the disease goes extinct when \(R_0^S < 1\) and the noise intensity is sufficiently small. The parameters are chosen as \( N=100, \beta=0.5, \mu=20, \gamma=25, \sigma=0.035, \theta=\vartheta=0.5, h=0.01 \). For these values, \(R_0^S \approx 0.978 < 1\) and \(\sigma^2 = 0.001225 < \beta/(2N) = 0.0025\), satisfying the conditions of Theorem \ref{thm4}.

\begin{center}
\includegraphics[width=0.45\linewidth]{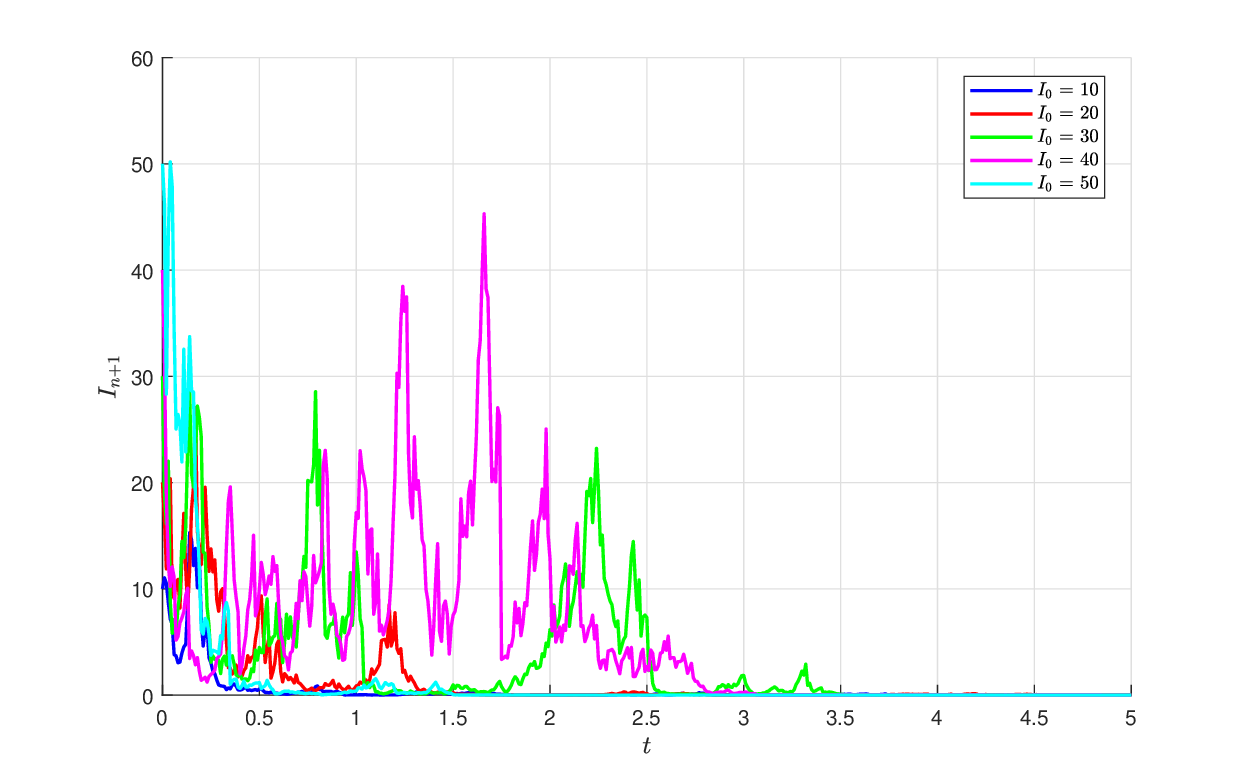} 
\includegraphics[width=0.45\linewidth]{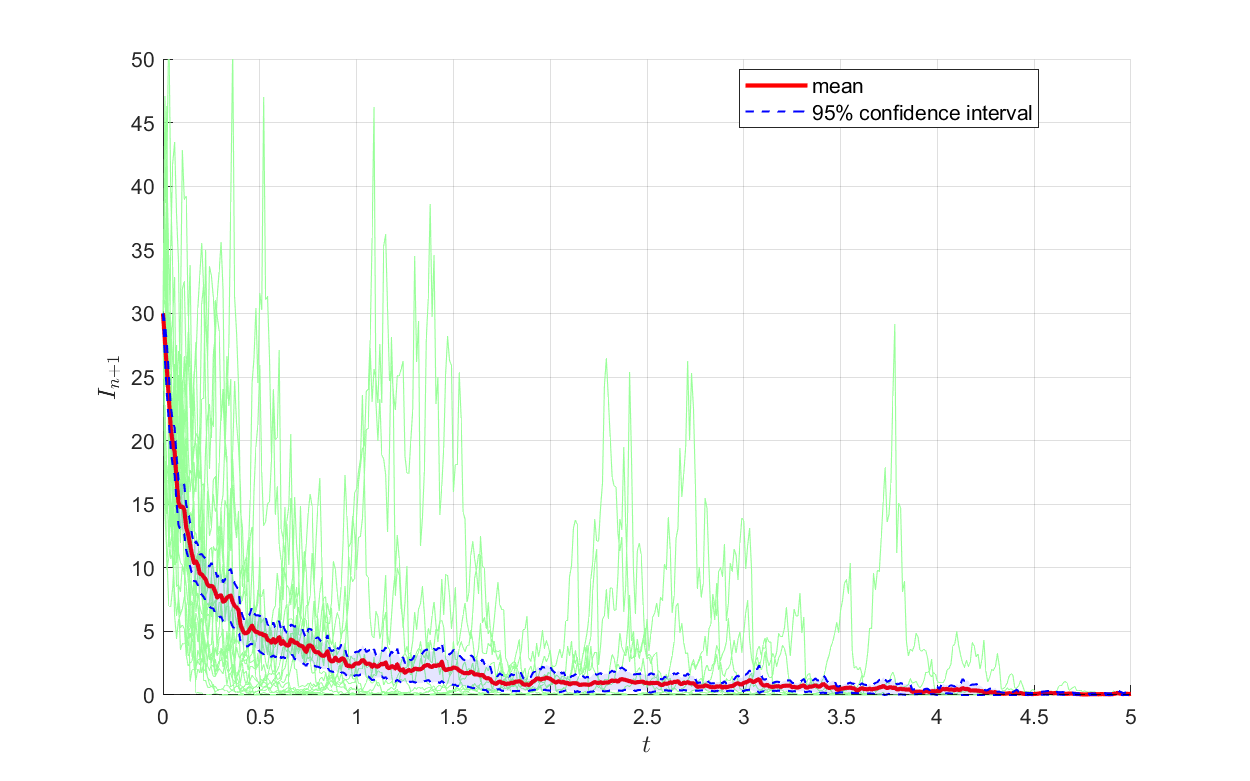}

{\rm Figure 3. Disease extinction: (a) Time evolution for five initial conditions; (b) 100 paths, each showing extinction.}
\end{center}

\begin{center}
	\includegraphics[width=0.9\linewidth]{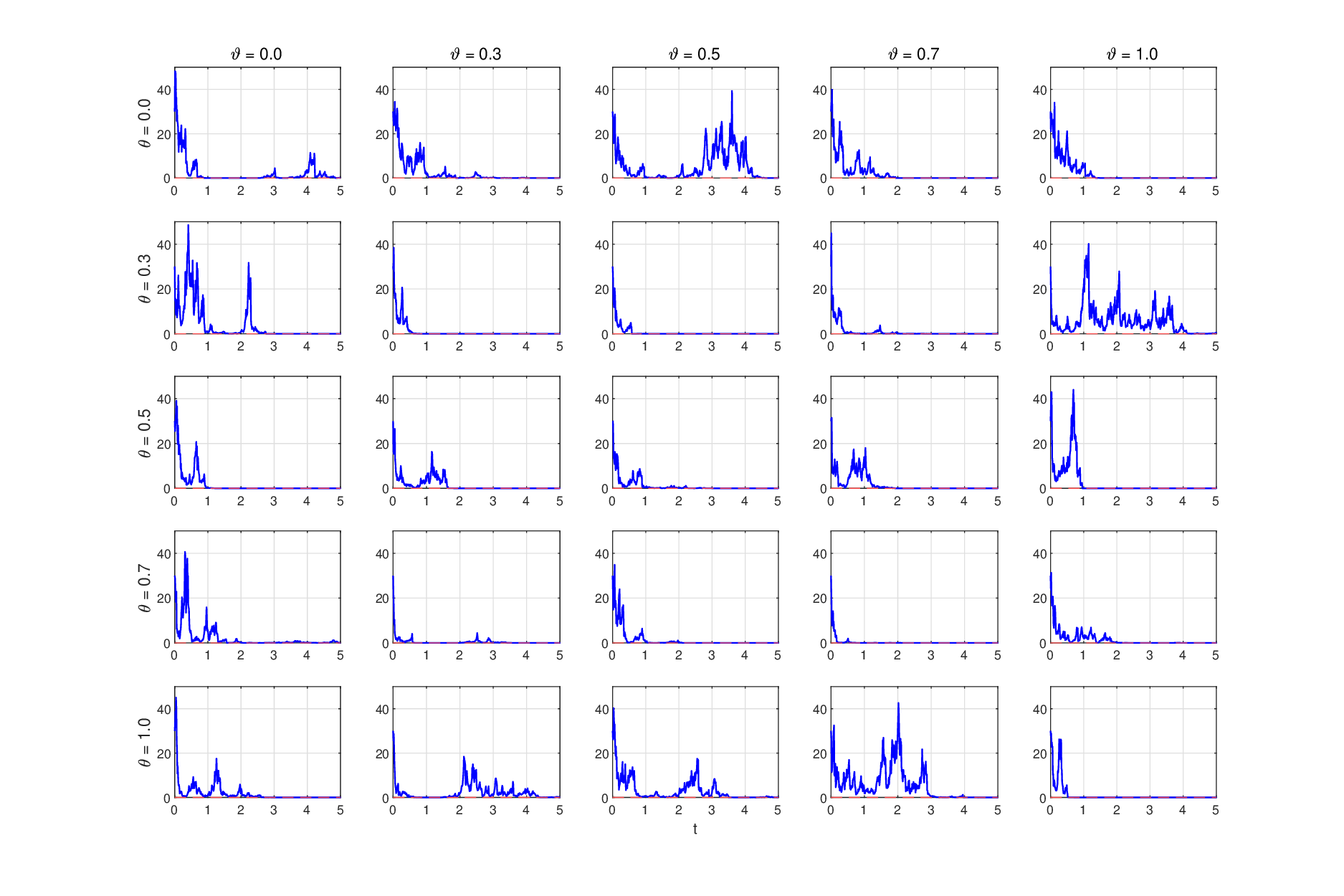}

{\rm Figure 4. Variation of \( I_{n+1} \) with time \( t \) under different values of \( \theta \) and \( \vartheta \). }
\end{center}

Figure 3(a) displays the time evolution of the infected population under five different initial conditions: \(I_0 = 10, 20, 30, 40, 50\). In all cases, the infected population decays monotonically toward zero, confirming extinction. Figure 3(b) presents 100 independent Monte Carlo paths starting from \(I_0 = 30\). Despite stochastic fluctuations, every path converges to zero, indicating that extinction is almost sure.
Figure 4 further examines the sensitivity of this result to the choice of the implicit parameters \(\theta\) and \(\vartheta\). For 25 different combinations,  extinction occurs regardless of the choice of \(\theta\) and \(\vartheta\),  demonstrating the robustness of the theoretical prediction.

}


\end{example}
\begin{example}\label{exam4}
(Disease extinction under high noise) {\rm 
In Theorem \ref{thm5}, sufficiently strong environmental noise can drive the disease to extinction. To test this, we increase the noise intensity to \(\sigma = 0.1\) while keeping all other parameters the same as in Example \ref{exam3}. Here \(\sigma^2 = 0.01 > \beta/N = 0.005\), satisfying the high-noise condition of Theorem \ref{thm5}.

\begin{center}
\includegraphics[width=0.45\linewidth]{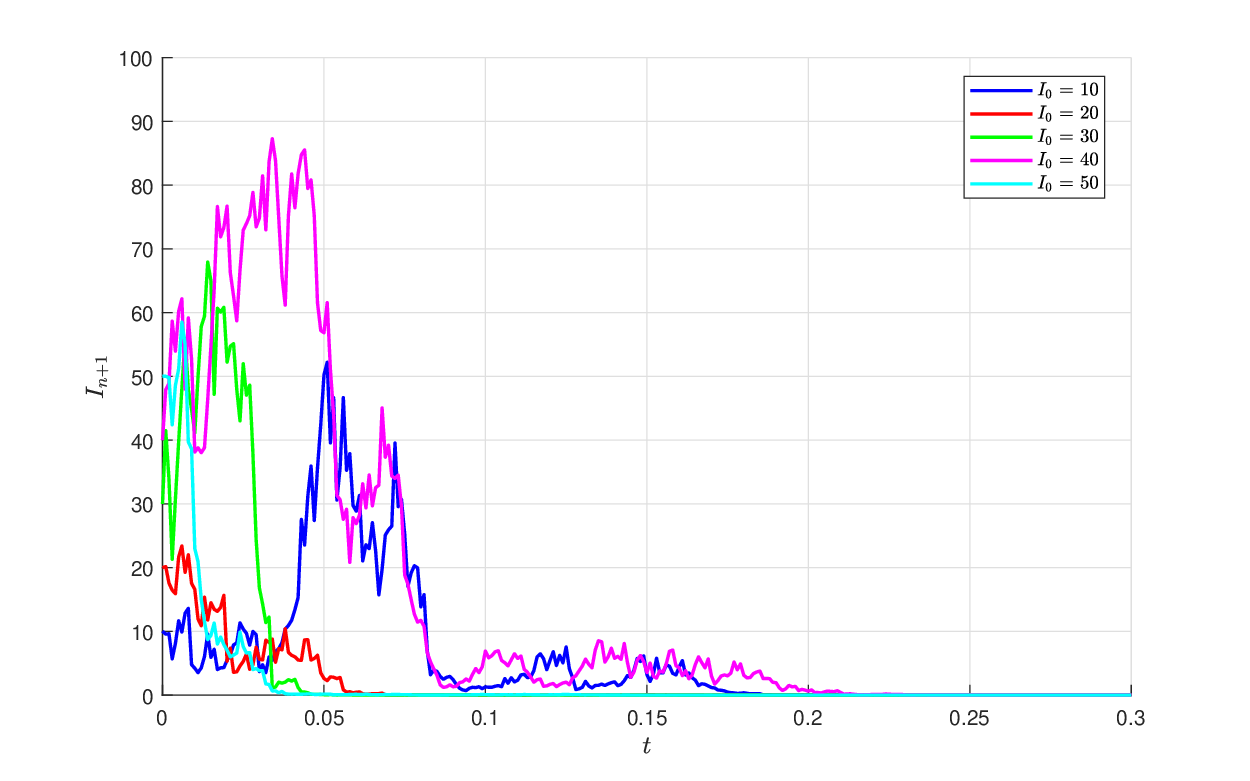} 
\includegraphics[width=0.45\linewidth]{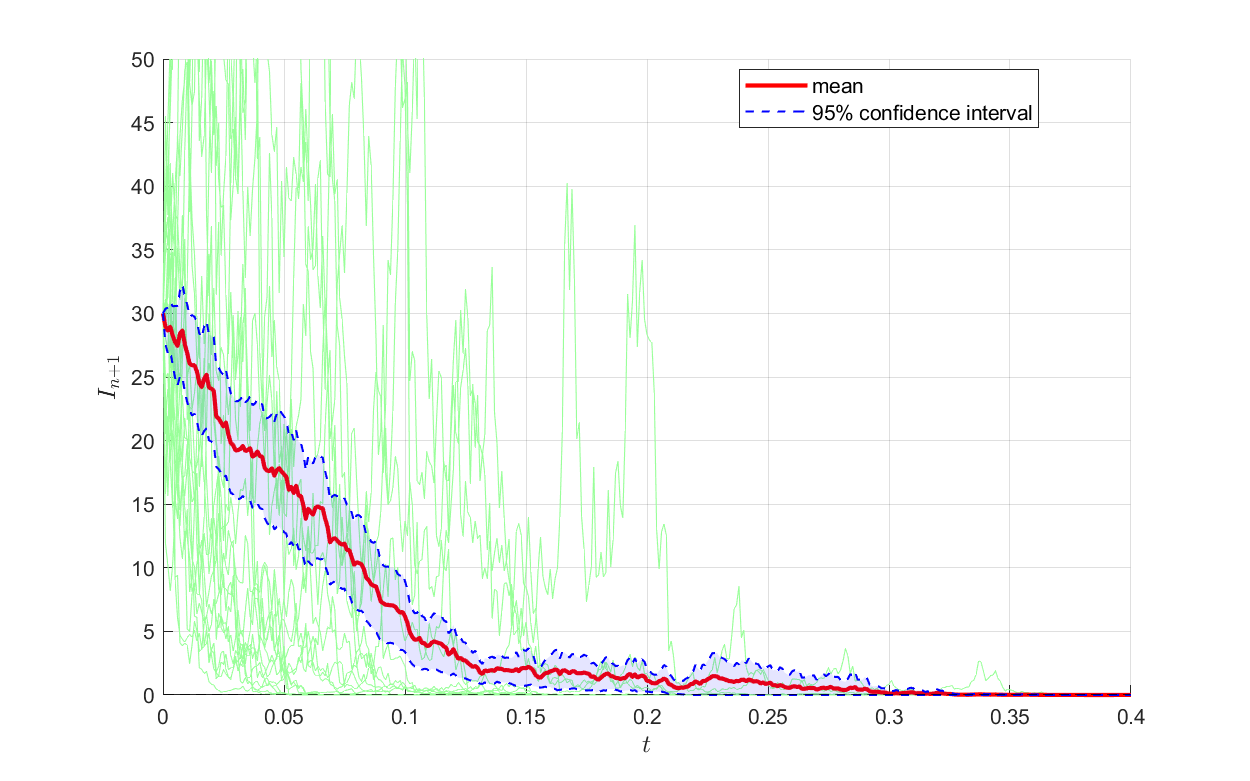}

{\rm Figure 5. Disease extinction: (a) Time evolution for different \( I_0 \) sets; (b) 100  paths.}
\end{center}

\begin{center}
\includegraphics[width=1\linewidth]{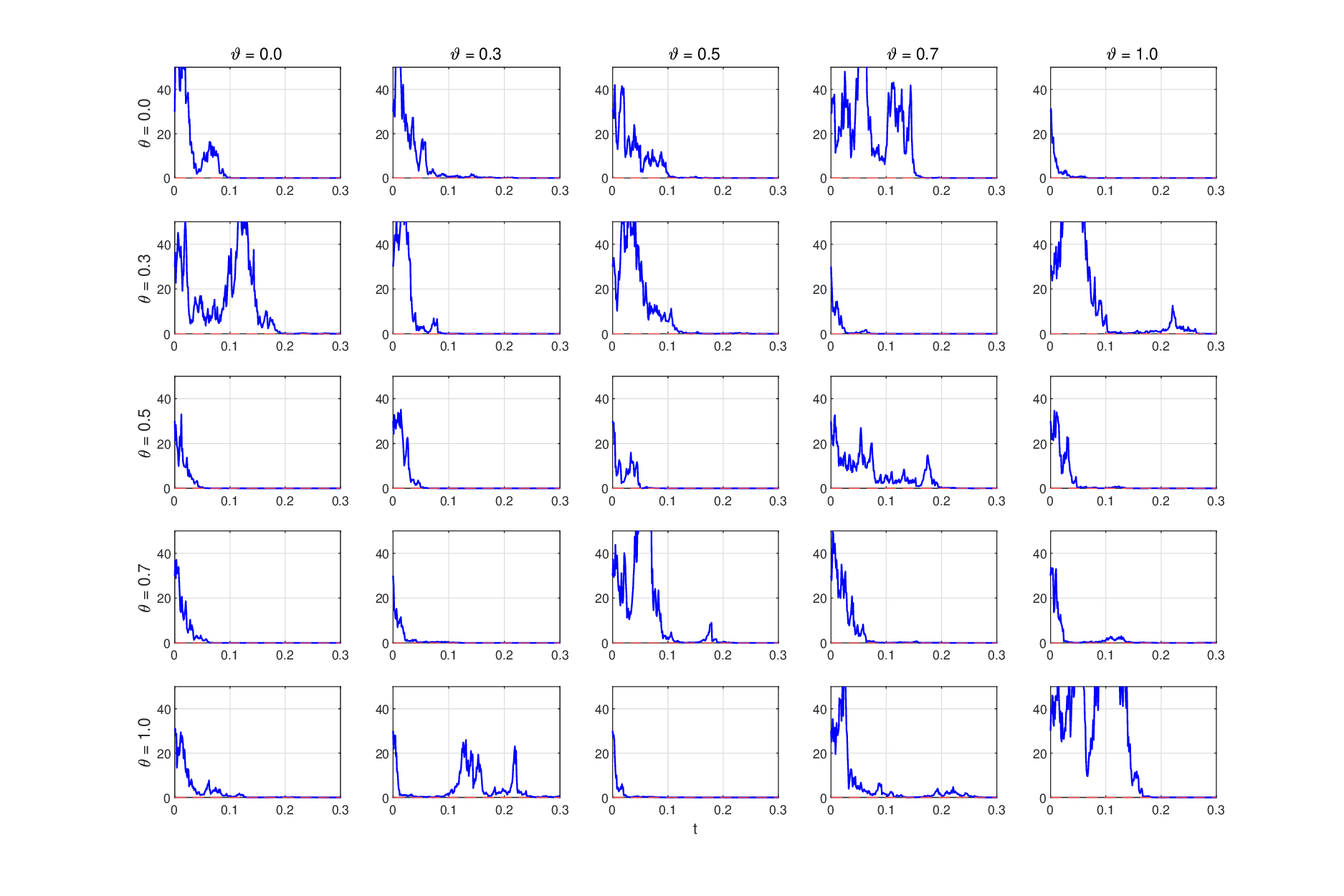}

{\rm Figure 6. The number of infected individuals \( I_{n+1} \) varies with time \( t \) for different values of \( \theta \) and \( \vartheta \).  }
\end{center}

Figure 5 (a) shows that under five different initial conditions, the disease goes extinct in all cases. Figure 5 (b) presents the corresponding Monte Carlo simulation with 100 paths, confirming that extinction is almost sure. Notably, the extinction occurs much faster than in the low-noise case of Example \ref{exam3}, illustrating the accelerating effect of large environmental fluctuations. Figure 6 demonstrates that this high-noise extinction result is also independent of the choice of \(\theta\) and \(\vartheta\), further validating the theoretical analysis.}


\end{example}
\begin{example}\label{exam5} (Disease persistence when \(R_0^S > 1\)) {\rm 
 Theorem \ref{thm6} reveals that the disease persists when \(R_0^S > 1\) and the implicit parameter \(\vartheta\) lies in the interval \([(3-\sqrt{5})/4, 1/2)\). The parameters are set to \( N=100, \beta=0.6, \mu=20, \gamma=25, \sigma=0.035, \theta=0.5, \vartheta=0.4, h=0.01 \). For these values, \(R_0^S \approx 1.178 > 1\) satisfies the required condition.

\begin{center}
\includegraphics[width=0.45\linewidth]{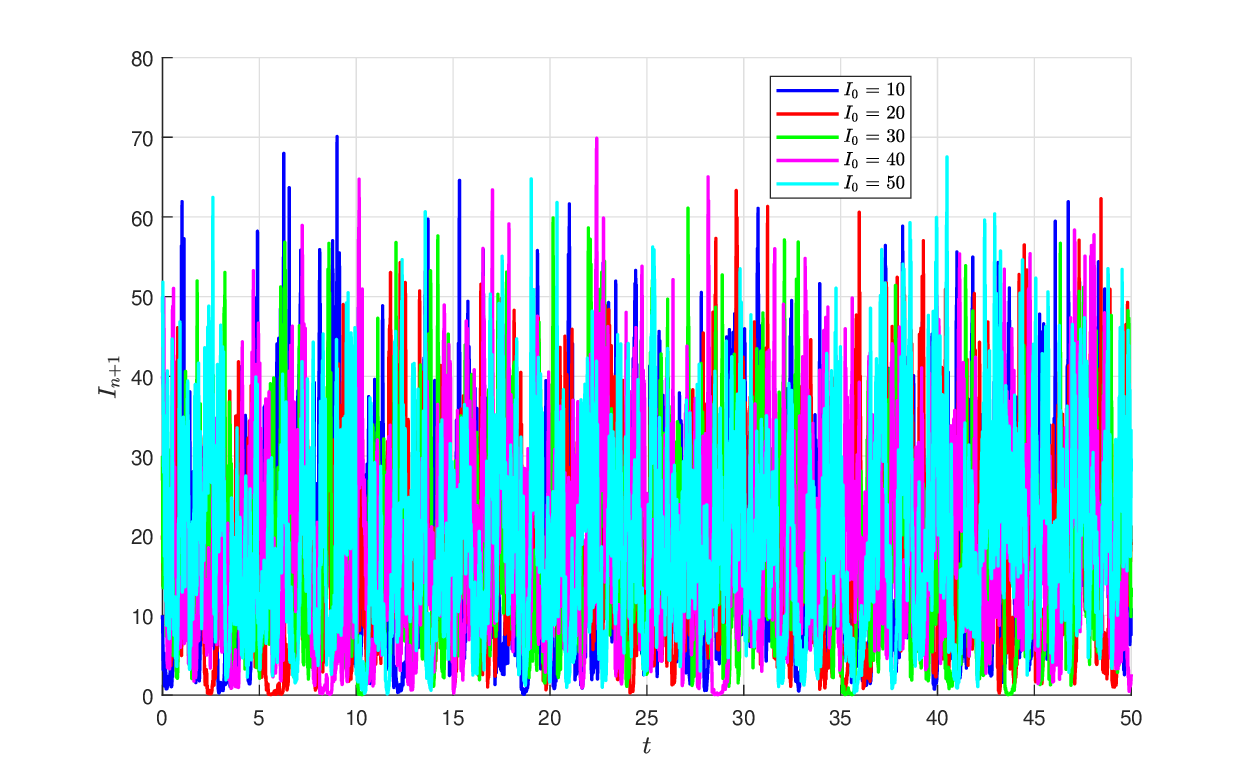} 
\includegraphics[width=0.45\linewidth]{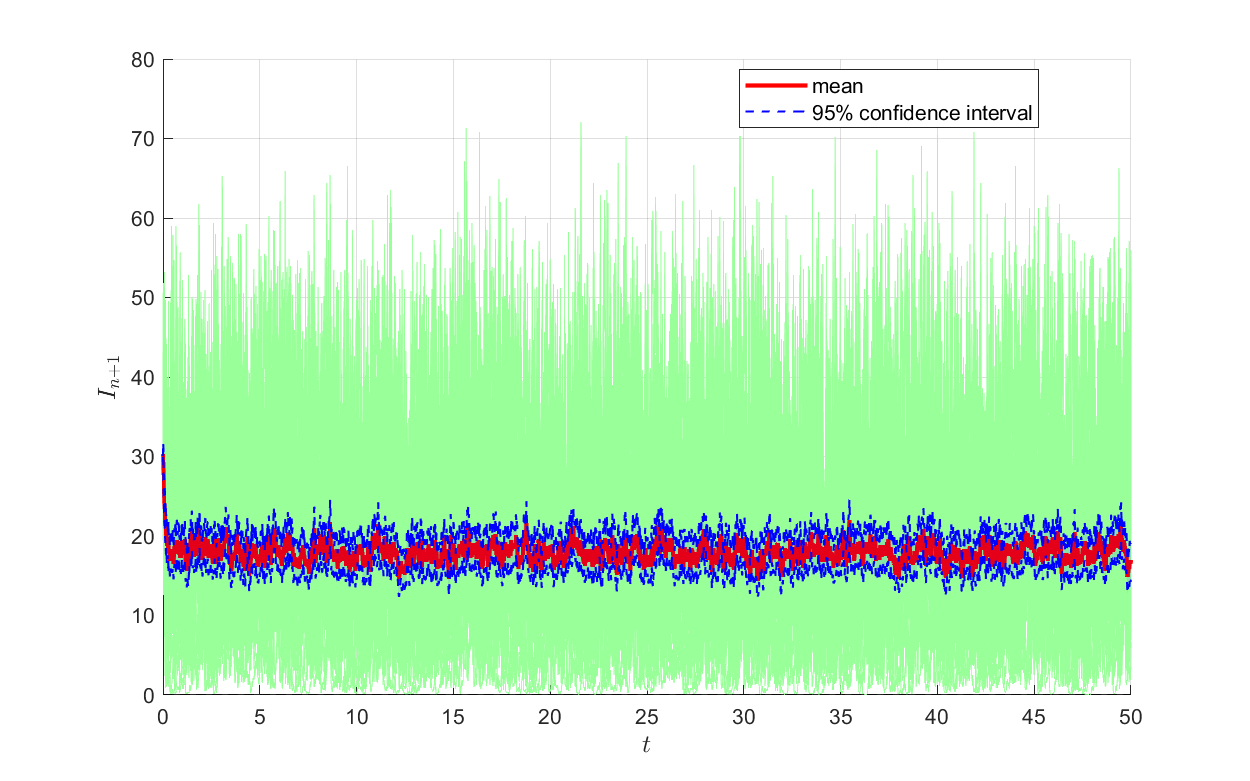}

{\rm Figure 7.    (a) Time evolution for five initial conditions, all converging to a positive endemic level; (b) Mean and 95\% confidence interval from 100  paths, showing that the infected population remains bounded away from zero.}
\end{center}

\begin{center}
\includegraphics[width=1\linewidth]{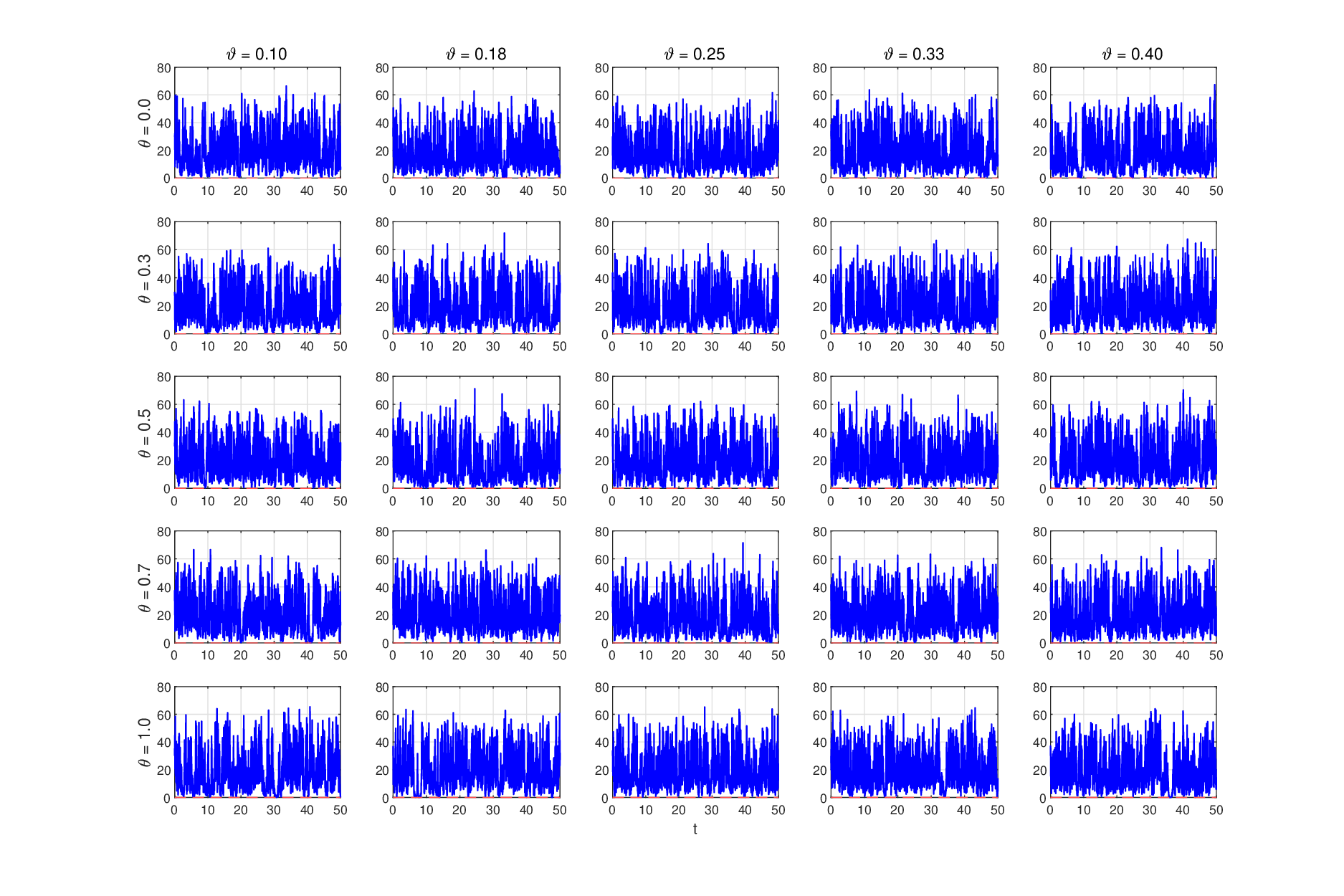}

{\rm Figure 8. Under the conditions of Theorem \ref{thm6}, the trend of \( I_{n+1} \) over time \( t \) for different values of \( \theta \) and \( \vartheta \). }
\end{center}

Figure 7(a) illustrates the long-term behavior of the infected population under five different initial conditions: \(I_0 = 10, 20, 30, 40, 50\). In contrast to the extinction cases, all trajectories converge to a positive endemic level. Figure 7(b) shows the mean and 95\% confidence interval obtained from 100 Monte Carlo paths. The confidence interval remains strictly above zero after the initial transient, providing statistical evidence of persistence. Figure 8 examines the effect of varying \(\theta\) and \(\vartheta\) while keeping other parameters fixed; the disease still persists but with a slightly different endemic level, confirming the robustness of the persistence behavior under the given parameter regime.}


\end{example}
	\section{Conclusion}\label{s5}
	First, by discretizing the stochastic SIS epidemic model (\cite{32}) using the SSBE method (\cite{34}), introducing a Wiener truncation (\cite{33}), and further incorporating the linear implicit $\theta$-methods (\cite{1}) into the diffusion term, a class of fully implicit stochastic SIS models was established by simplifying the model into the linear structure Eq. (\ref{In4}).
	
	Second, in the theoretical proof section, Theorem \ref{thm2} demonstrates the positivity and boundedness of the solution generated by Eq. (\ref{In4}) after introducing the Wiener truncation. Theorem \ref{thm3} proves that the global convergence order of the numerical solution error is $1/2$. Theorems \ref{thm4} to \ref{thm6} discuss the extinction and persistence of the disease under three different scenarios: when the threshold $R_0^S < 1$ in a low-noise environment, the disease eventually goes extinct; high noise also leads to disease extinction; when $R_0^S > 1$ and $(3-\sqrt{5})/4 \leq \vartheta <1/2$, the disease persists.
	
	Finally, numerical simulations are conducted to visually demonstrate the conclusions of the theoretical proofs. Example \ref{exam1} compares the probability of negative solutions occurring with and without truncation, showing that the Wiener truncation ensures the positivity of the model solution. Example \ref{exam2} compares the computation time and error between the nonlinear Eq. (\ref{ssbe2}) and the linear Eq. (\ref{In4}). The results show that the computation time of Eq. (\ref{In4}) is lower than that of Eq. (\ref{ssbe2}), while the difference in error is small, making Eq. (\ref{In4}) a preferable choice for solving. Examples \ref{exam3} and \ref{exam4} verify, under the conditions of Theorem \ref{thm4} and Theorem \ref{thm5}, respectively, that the disease eventually becomes extinct. Example \ref{exam5} demonstrates that under the conditions of Theorem \ref{thm6}, the disease persists.

	\section*{Conflict of interest}
	The authors declare that they have no conflict of interest.
	
	
\section*{Funding}
The work was supported by the Xinjiang Talent Development Fund (XJRC-2025-KJ-PY-KJLJ-108), the National Natural Science Foundation of China (12561047), and the 2025 Central Guidance for Local Science and Technology Development Fund (ZYYD2025ZY20).
	
\section*{ CRediT authorship contribution statement}  Zeyao Jin: Writing – original draft, Methodology, Conceptualization. Zhiming Li: Writing – review \& editing, Supervision, Funding acquisition. Shenglong Chen: Writing – review \& editing.

	\printcredits
	


\end{document}